\documentclass[11pt,twoside, leqno]{article}

\usepackage{amssymb}
\usepackage{amsmath}
\usepackage{amsthm}
\usepackage{bbding}

\usepackage{mathrsfs}

\allowdisplaybreaks

\pagestyle{myheadings}\markboth{Endpoint Boundedness of Riesz
Transforms}{Jun Cao, Dachun Yang and Sibei Yang}

\textwidth=15cm
\textheight=21.36truecm
\oddsidemargin 0.35cm
\evensidemargin 0.35cm

\parindent=13pt

\def\dsum{\displaystyle\sum}
\def\dint{\displaystyle\int}

\def\r{\right}
\def\lf{\left}
\def\pat{\partial}
\def\ls{\lesssim}

\def\fz{\infty}
\def\fz{\infty}
\def\az{\alpha}
\def\supp{{\mathop\mathrm{\,supp\,}}}

\def\loc{{\mathop\mathrm{\,loc\,}}}
\def\bmo{{\mathop\mathrm{BMO}}}
\def\dist{{\mathop\mathrm{dist}}}
\def\mol{{\mathop\mathrm{mol}}}

\def\lz{\lambda}
\def\dz{\delta}
\def\bdz{\Delta}
\def\ez{\epsilon}

\def\bz{\beta}
\def\fai{\varphi}

\def\tz{\theta}
\def\sz{\sigma}
\def\wz{\widetilde}

\def\ls{\lesssim}

\def\pat{\partial}

\def\rr{{\mathbb R}}
\def\rn{{{\rr}^n}}
\def\zz{{\mathbb Z}}
\def\nn{{\mathbb N}}

\def\zz{{\mathbb Z}}
\def\nn{{\mathbb N}}
\def\cc{{\mathbb C}}

\def\cl{{\mathcal L}}

\def\cm{{\mathcal M}}

\newtheorem{theorem}{Theorem}[section]
\newtheorem{lemma}{Lemma}[section]
\newtheorem{corollary}{Corollary}[section]

\theoremstyle{definition}

\newtheorem{definition}{Definition}[section]
\numberwithin{equation}{section}

\def\hs{\hspace{0.3cm}}

\begin{document}

\arraycolsep=1pt

\title{\Large\bf Endpoint Boundedness of Riesz Transforms on
Hardy Spaces Associated with Operators \footnotetext
{\hspace{-0.7cm} J. Cao $\cdot$ D.-C. Yang (\Envelope) $\cdot$
S.-B. Yang\\
\noindent School of Mathematical Sciences, Beijing Normal
University, Laboratory of Mathematics and Complex Systems, Ministry
of Education, Beijing 100875, People's Republic of China\\
\bigskip
\noindent e-mails: dcyang@bnu.edu.cn\\
\noindent J. Cao\\
\bigskip
e-mails: caojun1860@mail.bnu.edu.cn\\
S.-B. Yang\\
e-mails: yangsibei@mail.bnu.edu.cn}}
\author{Jun Cao $\cdot$ Dachun Yang $\cdot$ Sibei Yang}
\date{}
\maketitle

\noindent{\bf Abstract}\quad Let $L_1$ be a nonnegative self-adjoint
operator in $L^2({\mathbb R}^n)$ satisfying the Davies-Gaffney
estimates and $L_2$ a second order divergence form elliptic operator
with complex bounded measurable coefficients. A typical example of
$L_1$ is the Schr\"odinger operator $-\Delta+V$, where $\Delta$ is
the Laplace operator on ${\mathbb R}^n$ and $0\le V\in
L^1_{\mathop\mathrm{loc}} ({\mathbb R}^n)$. Let
$H^p_{L_i}(\mathbb{R}^n)$ be the Hardy space associated to $L_i$ for
$i\in\{1,\,2\}$. In this paper, the authors prove that the Riesz
transform $D (L_i^{-1/2})$ is bounded from $H^p_{L_i}(\mathbb{R}^n)$
to the classical weak Hardy space $WH^p(\mathbb{R}^n)$ in the
critical case that $p=n/(n+1)$. Recall that it is known that $D
(L_i^{-1/2})$ is bounded from $H^p_{L_i}(\mathbb{R}^n)$ to the
classical Hardy space $H^p(\mathbb{R}^n)$ when $p\in(n/(n+1),\,1]$.

\bigskip
\noindent{\bf Keywords} Riesz transform $\cdot$ Davies-Gaffney
estimate $\cdot$ Schr\"odinger operator $\cdot$ Second order
elliptic operator $\cdot$ Hardy space $\cdot$ Weak Hardy space

\bigskip

\noindent{\bf Mathematics Subject Classification (2010)} Primary
47B06 $\cdot$ Secondary 42B20 $\cdot$ 42B25 $\cdot$ 42B30 $\cdot$
35J10

\section{Introduction}\label{s1}

\hskip\parindent The Hardy spaces, as a suitable substitute of
Lebesgue spaces $L^p(\rn)$ when $p\in (0,1]$,
play an important role in various fields
of analysis and partial differential equations. For example, when
$p\in(0,\,1]$, the \emph{Riesz transform} $\nabla(-\bdz)^{-1/2}$ is not
bounded on $L^p(\rr^n)$, but bounded on the Hardy space
$H^p(\rr^n)$, where $\Delta$ is the \emph{Laplacian operator}
$\sum_{i=1}^n\frac {\partial^2}{\partial x_i^2}$
and $\nabla$ is the \emph{gradient operator} $(\frac \partial{\partial x_1},
\cdots, \frac \partial{\partial x_n})$ on $\rn$. It is well
known that the classical Hardy spaces
$H^p(\rn)$ are essentially related to $\bdz$, which has been
intensively studied in, for example, \cite{cw77,fs,s,sw,tw}
and their references.

In recent years, the study of Hardy spaces associated to
differential operators inspires great interests; see, for example,
\cite{adm2,amr08,ar,dy05-2,dz1,dz2,hlmmy,hm09,hm09c,hmm,dxy07} and their
references. In particular, Auscher, Duong and McIntosh \cite{adm2}
first introduced the Hardy space $H_L^1(\rn)$ associated to $L$,
where the \emph{heat kernel generated by $L$ satisfies a pointwise Poisson
type upper bound}. Later, Duong and Yan \cite{dy05-1,dy05-2}
introduced its dual space $\bmo_L(\rn)$ and established the dual
relation between $H_L^1(\rn)$ and $\bmo_{L^*}(\rn)$, where $L^*$
denotes the \emph{adjoint operator} of $L$ in $L^2(\rn)$. Yan \cite{y08}
further introduced the Hardy space $H_L^p(\rn)$ for some
$p\in(0,\,1]$ but near to $1$ and generalized these results to $H_L^p(\rn)$ and
their dual spaces. A theory of the Orlicz-Hardy space and its dual space
associated to a such $L$ were developed in \cite{jyz09,jya}.

Moreover, for the \emph{Schr\"odinger operator} $-\bdz+V$, Dziuba\'nski and
Zienkiewicz \cite{dz1,dz2} first introduced the Hardy spaces
$H_{-\bdz+V}^p(\rr^n)$ with the \emph{nonnegative potential} $V$ belonging
to the reverse H\"older class $B_q(\rr^n)$ for certain $q\in
(1,\fz)$. As a special case, the Hardy space
$H^p_{-\bdz+V}(\rn)$ associated with
$-\bdz+V$ with $0\le V\in L^1_{\mathop\mathrm{loc}} ({\mathbb R}^n)$
and $p\in (0,1]$ but near to $1$ was also
studied in, for example, \cite{dy05-2,hlmmy,y08,jyz09,yyz,yz,jy,dl}.
More generally, for \emph{nonnegative self-adjoint operators
$L$ satisfying the Davies-Gaffney estimates}, Hofmann et al.
\cite{hlmmy} introduced a new Hardy space $H_L^1(\rn)$. In
particular, when $L\equiv-\bdz+V$ with $0\le V\in
L^1_{\mathop\mathrm{loc}} ({\mathbb R}^n)$, Hofmann et al. originally
showed that the Riesz transform $\nabla(L^{-1/2})$ is bounded
from $H_L^1(\rn)$ to the classical Hardy space $H^1(\rn)$.
These results in \cite{hlmmy} were further extended to the
Orlicz-Hardy space and its dual space in \cite{jy}. In particular, as a
special case of \cite[Theorem 6.3]{jy}, it was proved that
$\nabla(-\bdz+V)^{-1/2}$ with $0\le V\in L^1_{\loc}(\rn)$ is bounded
from the Hardy space $H^p_{-\bdz+V}(\rn)$ to $H^p(\rn)$ if
$p\in(\frac{n}{n+1},1]$.

Also, Auscher and Russ \cite{ar} studied the Hardy space $H^1_L$ on
strongly Lipschitz domains associated with a \emph{second order divergence
form elliptic operator} $L$ whose heat kernels have the Gaussian
upper bounds and certain regularity. Hofmann and Mayboroda \cite{hm09,hm09c}
and Hofmann et al. \cite{hmm} introduced the Hardy and
Sobolev spaces associated to a \emph{second order divergence form elliptic
operator $L$ on $\rn$ with complex bounded measurable coefficients}.
Notice that, for the second order divergence form elliptic operator
$L$, the kernel of the heat semigroup may fail to satisfy the
Gaussian upper bound estimate and, moreover, $L$ may not be
nonnegative self-adjoint in $L^2(\rn)$. Hofmann et al.
\cite{hmm} also proved that the associated Riesz transform $\nabla
{L^{-1/2}}$ is bounded from $H^p_L(\rn)$ to the classical Hardy
space $H^p(\rn)$ with $p\in(\frac{n}{n+1},\,1]$, which was also
independently obtained by Jiang and Yang in \cite[Theorem 7.4]{jy10}.
Moreover, a theory of the Orlicz-Hardy space and its dual
space associated to $L$ were developed in \cite{jy10, jy10a}.

Recently, the Hardy space $H_{(-\bdz)^2+V^2}^1(\rn)$ associated to
the Schr\"odinger-type operators $(-\bdz)^2+V^2$ with $0\le V$
satisfying the reverse H\"older inequality was also studied in
\cite{cly}. Moreover, the Hardy space $H_{L}^p(\rn)$ associated to a
\emph{one-to-one operator of type $\omega$ satisfying the
$k$-Davies-Gaffney estimate and having a bounded $H_{\fz}$ functional
calculus} was  introduced in \cite{cy}, where $k\in\nn$. Notice that when
$k=1$, the $k$-Davies-Gaffney estimate is just the Davies-Gaffney
estimate. Typical examples of such operators include the \emph{$2k$-order
divergence form homogeneous elliptic operator $T_1$ with complex
bounded measurable coefficients} and the \emph{$2k$-order
Schr\"odinger-type operator $T_2\equiv(-\Delta)^k+V^k$}, where $0\le
V\in L^k_{\mathop\mathrm{loc}}(\mathbb{R}^n)$. It was further proved
that the associated Riesz transform $\nabla^k {T_i^{-1/2}}$ for
$i\in\{1,2\}$ is bounded from $H^p_{T_i}(\rn)$ to $H^p(\rn)$ with
$p\in(\frac{n}{n+k},\,1]$ in \cite{cy}.

On the other hand, the weak Hardy space $WH^1(\rn)$ was first
introduced by Fefferman and Soria in \cite{fs1}. Then, Liu \cite{li}
studied the weak $WH^p(\rn)$ space for $p\in(0,\,\fz)$ and established
a weak atomic decomposition for $p\in(0,\,1]$. Liu in \cite{li} also
showed that the $\dz$-Calder\'on-Zygmund operator is bounded
from $H^p(\rn)$ to $WH^p(\rn)$ with $p=n/(n+\dz)$, which was
extended to the weighted weak Hardy spaces in \cite{qy}.

Let $L_1$ be a \emph{nonnegative self-adjoint operator in
$L^2(\rn)$ satisfying the Davies-Gaffney estimates} and
$L_2$ a \emph{second order divergence form elliptic operator
with complex bounded measurable
coefficients}. A typical example of $L_1$ is the Schr\"odinger
operator $-\Delta+V$, where $0\le V\in L^1_{\mathop\mathrm{loc}} ({\mathbb
R}^n)$. Let $H^p_{L_i}(\mathbb{R}^n)$ be the \emph{Hardy space associated
to $L_i$} for $i\in\{1,\,2\}$. In this paper, we prove that
the Riesz transform $D (L_i^{-1/2})$ is bounded from
$H^p_{L_i}(\mathbb{R}^n)$ to the weak Hardy space
$WH^p(\mathbb{R}^n)$ in the critical case that $p=n/(n+1)$.
To be precise, we have the following general result.

\begin{theorem}\label{t1.1}
Let $p\equiv n/(n+1)$, $L_1$ be a nonnegative self-adjoint operator
in $L^2(\rn)$ satisfying the assumptions (A$_1$) and (A$_2$) as in
Section \ref{s2} and $D$ the operator satisfying the assumptions
(B$_1$), (B$_2$) and (B$_3$) as in Section \ref{s2}. Then the
operator $D (L_1^{-1/2})$ is bounded from $H_{L_1}^p(\rn)$ to the
classical weak Hardy space $WH^p(\rn)$. Moreover, there exists a
positive constant $C$ such that for all $f\in H_{L_1}^p(\rn)$,
\begin{eqnarray*}
\lf\|D (L_1^{-1/2})f\r\|_{WH^p(\rn)}\le C\|f\|_{H_{L_1}^p(\rn)}.
\end{eqnarray*}
\end{theorem}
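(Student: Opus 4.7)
The plan is to combine the molecular characterization of $H^p_{L_1}(\rn)$ with Liu's weak atomic decomposition of $WH^p(\rn)$ \cite{li}, and to exploit the off-endpoint strong bound $D(L_1^{-1/2}): H^q_{L_1}(\rn) \to H^q(\rn)$ for $q \in (n/(n+1),1]$ recalled in the abstract. Given $f \in H^p_{L_1}(\rn)$ with $p = n/(n+1)$, I would decompose $f = \sum_j \lambda_j m_j$, where each $m_j$ is a $(p,M,\epsilon)$-molecule adapted to $L_1$ associated with a ball $B_j$ and $(\sum_j |\lambda_j|^p)^{1/p} \ls \|f\|_{H^p_{L_1}(\rn)}$. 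For each height $\alpha>0$, perform a Calder\'on--Zygmund-type splitting on the molecular sum,
\[
J_\alpha := \lf\{j : |\lambda_j|\,|B_j|^{-1/p} \le \alpha\r\}, \quad f = g_\alpha + b_\alpha,
\]
with $g_\alpha := \sum_{j\in J_\alpha} \lambda_j m_j$ the \emph{good} part and $b_\alpha := f - g_\alpha$ the \emph{bad} part.

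For $g_\alpha$, a standard computation shows $\|g_\alpha\|_{H^q_{L_1}(\rn)}^q \ls \alpha^{q-p} \|f\|_{H^p_{L_1}(\rn)}^p$ for any $q \in (p, 1]$ sufficiently close to $p$. Combining this with the known off-endpoint boundedness $D(L_1^{-1/2}): H^q_{L_1}(\rn) \to H^q(\rn)$ and Chebyshev's inequality applied to the associated grand maximal function $\cm$ characterising $WH^p(\rn)$ yields
\[
\lf|\lf\{x : \cm(D(L_1^{-1/2}) g_\alpha)(x) > \alpha\r\}\r| \ls \alpha^{-p}\|f\|_{H^p_{L_1}(\rn)}^p.
\]
For $b_\alpha$, the defining condition of $J_\alpha^c$ forces $\sum_{j \notin J_\alpha} |B_j| \ls \alpha^{-p}\|f\|_{H^p_{L_1}(\rn)}^p$, so the geometric exceptional set $\Omega_\alpha := \bigcup_{j \notin J_\alpha} 2B_j$ has measure $\ls \alpha^{-p}\|f\|_{H^p_{L_1}(\rn)}^p$. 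Outside $\Omega_\alpha$, one derives a pointwise bound on $\cm(D(L_1^{-1/2}) b_\alpha)$ via the Calder\'on reproducing identity $L_1^{-1/2} = c\int_0^\infty e^{-tL_1}\,\frac{dt}{\sqrt t}$, the Davies--Gaffney off-diagonal estimates from (A$_2$), and the cancellation/regularity of $D$ encoded in (B$_1$)--(B$_3$). The central annular estimate takes the form
\[
\lf\|\chi_{U_k(B_j)} D(L_1^{-1/2}) m_j\r\|_{L^2(\rn)} \ls 2^{-k\epsilon'} |2^k B_j|^{1/2 - 1/p},
\]
where $U_k(B_j) := 2^{k+1} B_j \setminus 2^k B_j$ and $\epsilon'>0$ depends on (A$_2$), (B$_2$) and $M$.

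Combining the two pieces yields $|\{x : \cm(D(L_1^{-1/2}) f)(x) > \alpha\}| \ls \alpha^{-p}\|f\|_{H^p_{L_1}(\rn)}^p$, which is the required $WH^p$-bound by the maximal characterization of $WH^p(\rn)$. The main obstacle is precisely the borderline nature of the annular estimate at $p=n/(n+1)$: for any $q>p$ one has summability in $k$, which makes $D(L_1^{-1/2}) m_j$ a classical $H^q$-molecule, but at exactly $p=n/(n+1)$ the series $\sum_k 2^{-k\epsilon'} |2^k B_j|^{1/2-1/p}$ just fails to converge absolutely, so no individual molecule's image lies in $H^p(\rn)$. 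The technical heart of the proof is therefore to avoid estimating any single molecule in $H^p$, and instead to group the annular contributions across molecules at a common level $\alpha$, trading the missing logarithmic decay against the smallness of $\Omega_\alpha$. Verifying that this regrouping produces a genuine weak atomic decomposition of $D(L_1^{-1/2}) f$, with the quasi-norm bound required by Liu's characterization, is where I expect most of the effort to go.
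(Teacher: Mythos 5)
Your proposal takes a genuinely different route --- a Calder\'on--Zygmund-type good/bad splitting at each level $\az$, using the off-endpoint boundedness $D L_1^{-1/2}: H^q_{L_1}(\rn)\to H^q(\rn)$ for the good part and an exceptional-set argument for the bad part. The paper is much more direct. It uses the atomic (not molecular) decomposition of Theorem \ref{t2.1} together with the Stein--Taibleson--Weiss superposition lemma (Lemma \ref{l2.2}): if each $f_j$ satisfies the uniform weak bound $|\{x:|f_j(x)|>\az\}|\le\wz C\az^{-p}$, then $|\{x:|\sum_j\lz_jf_j(x)|>\az\}|\ls\az^{-p}\sum_j|\lz_j|^p$. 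This reduces the entire theorem to one uniform weak-$L^p$ estimate for $\sup_{t>0}|\fai_t*(DL_1^{-1/2}a)|$ over a single $(p,2,M)_{L_1}$-atom $a$, which is then proved by splitting the supremum according to $t<r_B$ (Lemma \ref{l2.3} gives geometric decay across the annuli $S_i(B)$) versus $t\ge r_B$ (the mean-value theorem on $\fai$ together with (B$_3$) gains the factor $|y-x_B|/t$, and a threshold index $j_0$ confines the level set to $(2^{i+1}+2^{j_0+1})B$). No strong $H^p(\rn)$ bound on $DL_1^{-1/2}a$ is ever needed.

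The bad-part step in your proposal is a genuine gap, and you say so yourself. Moreover, your diagnosis of the endpoint obstruction is not quite right: since $\ez'>0$ and $1/2-1/p<0$, the series $\sum_k 2^{-k\ez'}|2^kB_j|^{1/2-1/p}$ converges geometrically, so it is not the annular size estimate that breaks. What breaks at $p=n/(n+1)$ is the sum over dyadic time scales $t\sim 2^j r_B$ when one tries to integrate the radial maximal function in $L^p(\rn)$: the one-degree gain from the mean-value theorem produces a contribution of order $2^{j(n-p(n+1))}$ at each scale, which is exactly borderline when $p=n/(n+1)$. The weak bound survives because, at a fixed level $\az$, the threshold $j_0$ truncates the time-scale sum. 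Lemma \ref{l2.2} is precisely the mechanism that turns the resulting per-atom weak estimate into the global one. To complete your route you would need either an analogue of that superposition lemma or an explicit weak atomic decomposition of $DL_1^{-1/2}f$ in the sense of \cite{li}; neither is supplied in your outline, and the phrase ``group the annular contributions across molecules at a common level $\az$'' is the core of the argument rather than a technical afterthought.
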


As an application of Theorem \ref{t1.1},
we obtain the boundedness of $\nabla(-\bdz+V)^{-1/2}$ with
$0\le V\in L^1_\loc(\rn)$ from $H_{-\bdz+V}^p(\rn)$ to the classical weak
Hardy space $WH^p(\rn)$ in the critical case that $p=n/(n+1)$
as follows.

\begin{corollary}\label{c1.1}
Let $p\equiv n/(n+1)$ and $0\le V\in L^1_{\mathop\mathrm{loc}} ({\mathbb R}^n)$.
Then the Riesz transform $\nabla(-\bdz+V)^{-1/2}$ is bounded from
$H_{-\bdz+V}^p(\rn)$ to $WH^p(\rn)$.
Moreover, there exists a positive constant $C$ such that for all
$f\in H_{-\bdz+V}^p(\rn)$,
\begin{eqnarray*}
\lf\|\nabla(-\bdz+V)^{-1/2}f\r\|_{WH^p(\rn)}\le C\|f\|_{H_{-\bdz+V}^p(\rn)}.
\end{eqnarray*}
\end{corollary}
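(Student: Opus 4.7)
The plan is to deduce Corollary \ref{c1.1} as a direct consequence of Theorem \ref{t1.1} by taking $L_1\equiv-\bdz+V$ and $D\equiv\nabla$, and verifying that this pair satisfies the structural assumptions (A$_1$), (A$_2$) on $L_1$ and (B$_1$), (B$_2$), (B$_3$) on $D$ stated in Section \ref{s2}. Once this is done the conclusion is a literal specialization of the abstract theorem and no further work is required.

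For the hypotheses on $L_1$, I would first observe that, for $0\le V\in L^1_{\loc}(\rn)$, the operator $-\bdz+V$ can be defined via the Friedrichs extension of the closed, densely defined, nonnegative quadratic form $Q(f)=\int_{\rn}(|\nabla f|^2+V|f|^2)\,dx$, and is therefore a nonnegative self-adjoint operator in $L^2(\rn)$. The Davies-Gaffney estimates required by (A$_1$) then follow from the pointwise diamagnetic domination $|e^{-tL_1}f|\le e^{t\bdz}|f|$ combined with the classical Gaussian upper bound for the heat kernel of $-\bdz$; this gives in fact full Gaussian bounds, which are stronger than needed. The additional structural hypothesis (A$_2$) (typically a bounded $H_\fz$ functional calculus or a square-function identity) is automatic for nonnegative self-adjoint operators via the spectral theorem.

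For $D\equiv\nabla$ the key inputs are (i) the $L^2$-boundedness of $\nabla L_1^{-1/2}$, which follows at once from $\|\nabla f\|_{L^2}^2\le\langle L_1 f,f\rangle=\|L_1^{1/2}f\|_{L^2}^2$; and (ii) off-diagonal $L^2$ bounds for $\nabla e^{-tL_1}$ and $\nabla(tL_1)^{1/2}e^{-tL_1}$, with Gaussian decay in $\dist(E,F)^2/t$ and the correct scale $t^{-1/2}$. Once (i) and (ii) are in place, the subordination identity $\nabla L_1^{-1/2}=\pi^{-1/2}\int_0^\fz\nabla e^{-tL_1}\,t^{-1/2}\,dt$ yields the compatibility required by (B$_1$)--(B$_3$).

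I expect the main technical obstacle to lie in (ii) under the very weak hypothesis $V\in L^1_{\loc}(\rn)$: standard parabolic regularity arguments that exploit smoothness or local boundedness of the potential are not available. To overcome this, I would use the diamagnetic domination (and its subordinated forms) to transfer gradient heat-kernel estimates from the free Laplacian $-\bdz$ to $L_1$, or, equivalently, combine the $L^2$ estimate $\|L_1^{1/2}f\|_{L^2}^2=\|\nabla f\|_{L^2}^2+\int V|f|^2\,dx$ with a Caccioppoli-type localization adapted to $L_1$ to produce the required off-diagonal decay. With this in hand, all the abstract hypotheses of Theorem \ref{t1.1} are met and Corollary \ref{c1.1} follows.
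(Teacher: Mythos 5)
Your proposal follows the same overall route as the paper: reduce Corollary~\ref{c1.1} to Theorem~\ref{t1.1} by taking $L_1=-\bdz+V$, $D=\nabla$, and verifying assumptions (A$_1$), (A$_2$), (B$_1$), (B$_2$), (B$_3$); the paper packages this verification as Lemma~\ref{l2.1}, which invokes Trotter's formula for the Davies--Gaffney estimate of $e^{-tL_1}$ and cites \cite[Lemma~8.5, (8.20)]{hlmmy} for the gradient off-diagonal bound, the $L^2$-boundedness of $\nabla L_1^{-1/2}$, and the vanishing integral condition.

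Two cautions on your verification. First, diamagnetic (Trotter) domination gives $0\le e^{-tL_1}(x,y)\le e^{t\bdz}(x,y)$ and hence Davies--Gaffney for $e^{-tL_1}$ itself, but it does \emph{not} transfer to $\nabla e^{-tL_1}$: the gradient does not commute with the positivity-preserving comparison, and the pointwise estimate $|\nabla e^{-tL_1}f|\le \nabla e^{t\bdz}|f|$ is false. The two devices you list as ``equivalent'' are not; it is the Caccioppoli-type argument (this is what \cite[Lemma~8.5]{hlmmy} carries out) that actually produces the (B$_2$) off-diagonal bounds for $\sqrt{t}\,\nabla e^{-tL_1}$ under $V\in L^1_{\loc}(\rn)$. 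Second, you have swapped the roles of the paper's (A$_1$) and (A$_2$): here (A$_1$) is nonnegative self-adjointness and (A$_2$) is analyticity of the semigroup together with the Davies--Gaffney estimate; the latter is therefore not ``automatic from the spectral theorem.'' You also do not really address (B$_3$) (the cancellation $\int_{\rn}\nabla L_1^{-1/2}a=0$), which the paper handles by citation to \cite{hlmmy,jy}. None of this changes the high-level strategy, which matches the paper's.
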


On the Riesz transform defined by the second order divergence form
elliptic operator with complex bounded measurable coefficients,
we also have the following endpoint boundedness in the critical
case that $p\equiv n/(n+1)$.

\begin{theorem}\label{t1.2}
Let $p\equiv n/(n+1)$ and $L_2$ be the second order divergence form
elliptic operator with complex bounded measurable coefficients.
Then the Riesz transform $\nabla  (L_2^{-1/2})$ is bounded from
$H_{L_2}^p(\rn)$ to $WH^p(\rn)$.
Moreover, there exists a positive constant $C$ such that for all
$f\in H_{L_2}^p(\rn)$,
\begin{eqnarray*}
\lf\|\nabla (L_2^{-1/2})f\r\|_{WH^p(\rn)}\le
C\|f\|_{H_{L_2}^p(\rn)}.
\end{eqnarray*}
\end{theorem}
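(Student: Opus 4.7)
The plan is to follow the template of Theorem \ref{t1.1}, adapting the argument to the non-self-adjoint setting. Even though $L_2$ may fail to be self-adjoint in $L^2(\rn)$, it still satisfies the Davies-Gaffney estimates and admits a bounded $H_\infty$ functional calculus on $L^2(\rn)$ (see \cite{hm09, hmm}); moreover, by the resolution of the Kato conjecture, the Riesz transform $\nabla (L_2^{-1/2})$ is bounded on $L^2(\rn)$. These three ingredients play precisely the roles occupied by assumption (A$_2$) and by (B$_1$)--(B$_3$) in the proof of Theorem \ref{t1.1}; in particular, the $L^2$ off-diagonal estimates for $\nabla (L_2^{-1/2}) e^{-tL_2}$ needed in the argument can be extracted from the Davies-Gaffney estimates for $e^{-tL_2}$ via the holomorphic functional calculus.

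First I would verify that, with $L_2$ in place of $L_1$ and $\nabla$ in place of $D$, the key lemmas feeding into Theorem \ref{t1.1} remain valid. The only essential use of self-adjointness in that argument is via the spectral calculus, and this can be replaced here by the bounded holomorphic calculus on $L^2(\rn)$ combined with the off-diagonal bounds above. Next I would take a $(p, M, \ez)$-molecule $\alpha$ of $H^p_{L_2}(\rn)$ associated with a ball $B$, as provided by the molecular characterization of $H^p_{L_2}(\rn)$ in \cite{hmm}, and decompose
\begin{eqnarray*}
\nabla (L_2^{-1/2}) \alpha = \sum_{k=0}^\infty \chi_{U_k(B)}\, \nabla (L_2^{-1/2}) \alpha,
\end{eqnarray*}
where $U_0(B) := 4B$ and $U_k(B) := 2^{k+2} B \setminus 2^{k+1} B$ for $k \ge 1$. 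On $U_0(B)$ I would use the $L^2$-boundedness of $\nabla (L_2^{-1/2})$ together with H\"older's inequality. On $U_k(B)$ with $k \ge 1$, I would write $\nabla (L_2^{-1/2}) = c\int_0^\infty \nabla e^{-tL_2}\, \frac{dt}{\sqrt{t}}$, split the $t$-integral at $r_B^2$, and invoke the off-diagonal $L^2$ estimates to extract geometric decay in $k$.

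To pass from this molecular control to a $WH^p(\rn)$ bound, for $f = \sum_j \lz_j \alpha_j \in H^p_{L_2}(\rn)$ and each $\lz > 0$ I would split
\begin{eqnarray*}
f = \sum_{j:\,|\lz_j| \le \Lambda} \lz_j \alpha_j + \sum_{j:\,|\lz_j| > \Lambda} \lz_j \alpha_j =: f_\lz^1 + f_\lz^2
\end{eqnarray*}
with $\Lambda$ a suitable power of $\lz$, and then estimate
\begin{eqnarray*}
\lf|\lf\{x \in \rn:\, \lf|\nabla (L_2^{-1/2}) f(x)\r| > \lz\r\}\r| \le \lf|\lf\{\lf|\nabla (L_2^{-1/2}) f_\lz^1\r| > \lz/2\r\}\r| + \lf|\lf\{\lf|\nabla (L_2^{-1/2}) f_\lz^2\r| > \lz/2\r\}\r|.
\end{eqnarray*}
The first term I would control via Chebyshev's inequality together with the $L^2$ bound on $\nabla (L_2^{-1/2})$ and a molecular $L^2$ estimate for $f_\lz^1$; the second I would control by the total measure of the dilated balls $\bigcup_{j,\,k} 2^{k+2} B_j$ arising in the annular decomposition above, summed geometrically in $k$. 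Balancing the two by the choice of $\Lambda$ and taking a supremum in $\lz$ delivers the required quasi-norm bound.

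The main obstacle is the endpoint nature of $p = n/(n+1)$. For $p > n/(n+1)$, the annular decay combined with molecular H\"older regularity yields a series of the form $\sum_k 2^{-k\dz}$ that is summable for any $\dz > n(1/p - 1) - 1$, producing the strong $H^p_{L_2} \to H^p$ estimate of \cite{hmm}; but at $p = n/(n+1)$ exactly one power of decay is lost and the series becomes merely bounded. Absorbing this borderline divergence into a genuine weak $WH^p$ bound, rather than into a trivial or blown-up estimate, is the technical heart of the argument, and it is precisely what the level-set splitting above is designed to achieve.
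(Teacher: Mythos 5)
The core difficulty with your proposal is that in the step marked ``To pass from this molecular control to a $WH^p(\rn)$ bound,'' you estimate the level sets of the function $\nabla L_2^{-1/2}f$ itself, namely $|\{x\in\rn:\ |\nabla L_2^{-1/2}f(x)|>\lz\}|$. But membership in $WH^p(\rn)$ is not a weak-$L^p$ statement about $\nabla L_2^{-1/2}f$; by the paper's definition it requires
\[
\sup_{\az>0}\az^p\,\lf|\lf\{x\in\rn:\ \sup_{t>0}\lf|\fai_t\ast\lf(\nabla L_2^{-1/2}f\r)(x)\r|>\az\r\}\r|<\fz,
\]
a statement about the \emph{radial maximal function} of $\nabla L_2^{-1/2}f$. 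For $p<1$ these are genuinely different: a bounded function with compact support lies in $L^{p,\fz}(\rn)$ but not in $WH^p(\rn)$ unless it has cancellation, so proving a weak $L^p$ bound for $\nabla L_2^{-1/2}f$ does not yield the theorem. Your annular $L^2$ decay estimates $\|\nabla L_2^{-1/2}\alpha\|_{L^2(U_k(B))}\ls 2^{-k\dz}$ are the right starting material, but they must be fed into a maximal function estimate. The paper's whole argument is organized around this: for each molecule $A$ it decomposes $\sup_{t>0}|\fai_t\ast(\nabla L_2^{-1/2}A)|$ by $t\lessgtr r_B$, uses the compact support of $\fai$ and the vanishing moment of $\nabla L_2^{-1/2}A$ (via the mean value theorem applied to $\fai(\tfrac{x-y}{t})-\fai(\tfrac{x-x_B}{t})$) to gain a factor $|y-x_B|/t$, and then observes that the bad set $\wz{\mathrm{J}}_i$ is contained in a ball of radius comparable to $(2^{i+1}+2^{j_0+1})r_B$ where $j_0$ is determined by $\az$. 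That $\az$-dependent geometric localization, invisible in your proposal, is exactly what supplies the weak bound at the endpoint $p=n/(n+1)$.

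Two further remarks. First, your level-set splitting $f=f_\lz^1+f_\lz^2$ by the size of $|\lz_j|$ is not what the paper does; it instead invokes a superposition lemma of Stein, Taibleson and Weiss (Lemma \ref{l2.2}): if each molecular piece satisfies a uniform weak-$(p,p)$ bound and $\sum_j|\lz_j|^p<\fz$, then the series does too. Your splitting is essentially one standard way of proving that lemma, so it is an acceptable (if lengthier) substitute, provided the per-molecule weak estimate is for the maximal function. Second, your diagnosis that ``exactly one power of decay is lost and the series becomes merely bounded'' misreads the endpoint mechanism: in the paper both $\sum_i|\wz{\mathrm{I}}_i|$ and $\sum_i|\wz{\mathrm{J}}_i|$ still converge at $p=n/(n+1)$ (this is why $M>\tfrac{n}{4}+\tfrac12$, i.e.\ $M>1$, is imposed); what fails at the endpoint is the strong $L^p$ integrability of the maximal function's $|x|^{-(n+1)}$-type tail, and the weak quasi-norm is precisely what absorbs it, via the containment $\wz{\mathrm{J}}_i\subset(2^{i+1}+2^{j_0+1})B$.
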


Recall that the second order divergence form
elliptic operator with complex bounded measurable coefficients
may not be nonnegative self-adjoint operator
in $L^2(\rn)$. Thus, we cannot deduce the conclusion
of Theorem \ref{t1.2} from Theorem \ref{t1.1}. However, if $L$ is a
second order divergence form elliptic operator with real symmetric
bounded measurable coefficients, then $L$ satisfies the assumptions
of both Theorem \ref{t1.1} and Theorem \ref{t1.2}.

We prove Theorems \ref{t1.1} and \ref{t1.2}
by using the characterization of $WH^p(\rn)$ in terms of
the radial maximal function, namely, we need estimate the
weak $L^p(\rn)$ quasi-norm of the radial maximal function of
the Riesz transform acting on the atoms or molecules of the Hardy
spaces $H_{L_i}^p(\rn)$. Unlike the proof of the endpoint boundedness
of the classical Riesz transform $\nabla(-\bdz)^{-1/2}$,
whose kernel has the pointwise size estimate and
regularity, the strategy to show Theorems
\ref{t1.1} and \ref{t1.2} is to divide the radial maximal function into two
parts by the time $t$ based on the radius of the associated balls of atoms
or molecules and then estimate each part via using $L^2$ off-diagonal
estimates (see \cite{hm,hmm} or Lemma \ref{l2.1} below).

This paper is organized as follows. In Section \ref{s2}, we
describe some assumptions on the operator $L_1$;
then we recall some
notion and properties concerning the Hardy space associated to $L_1$
and second order divergence form elliptic operator $L_2$ with
complex bounded measurable coefficients. We also recall the
definition of weak Hardy spaces and present some technical lemmas
which are used later in the next section. Section \ref{s3} is
devoted to the proof Theorem \ref{t1.1}, Corollary \ref{c1.1},
and Theorem \ref{t1.2}. In Section \ref{s4},
a similar result on the Riesz transforms
defined by \emph{higher order divergence form homogeneous elliptic operators
with complex bounded measurable coefficients} or
\emph{Schr\"odinger-type operators}
is also presented.

Finally, we make some conventions on the notation. Throughout the whole
paper, we always let $\nn\equiv\{1,2,\cdots\}$ and $\zz_+\equiv
\nn\cup\{0\}$. We use $C$ to denote a {\it positive constant}, that
is independent of the main parameters involved but whose value may
differ from line to line. {\it Constants with subscripts},
such as $C_0$, do not change in
different occurrences. If $f\le Cg$, we then write $f\ls g$; and if
$f\ls g\ls f$, we then write $f\sim g$. For all $x\in\rr^n$ and
$r\in(0,\fz),$ let $B(x,r)\equiv\{y\in\rr^n:|x-y|<r\}$ and
$\az B(x,r)\equiv B(x,\az r)$ for any $\az>0$.  Also, for
any set $E\in \rn$, we use $E^\complement$ to denote the \emph{set}
$\rn\setminus E$ and $\chi_E$ the {\it characteristic function} of $E$.

\section{Preliminaries}\label{s2}

\hskip\parindent We begin with recalling some known
results on the Hardy spaces associated to operators and the weak
Hardy spaces.

Let ${L_1}$ be a \emph{linear operator} initially defined in $L^2(\rn)$ satisfying
the following {\it assumptions}:

(A$_1$) ${L_1}$ is nonnegative self-adjoint;

(A$_2$) The semigroup $\{e^{-t{L_1}}\}_{t>0}$ generated by ${L_1}$
is analytic on $L^2(\rn)$ and satisfying the \emph{Davies-Gaffney
estimates}, namely, there exist positive constants $C_1$ and $C_2$
such that for all closed sets $E$, $F\subset\rn$, $t\in(0,\,\fz)$
and $f\in L^2(\rn)$ supported in $E$,
\begin{eqnarray}\label{2.1}
\|e^{-t{L_1}}f\|_{L^2(F)}\le
C_1\exp\lf\{-\frac{[\dist(E,\,F)]^2}{C_2t}\r\}\|f\|_{L^2(E)},
\end{eqnarray}
where and in what follows, $\dist(E,\,F)\equiv \inf_{x\in E,\,y\in
F}|x-y|$ is the \emph{distance between $E$ and $F$}.

Typical examples of operators satisfying assumptions (A$_1$)
and (A$_2$) include the second order divergence form
elliptic operator with real symmetric bounded
measurable coefficients and the Schr\"odinger operator $-\bdz+V$
with $0\le V\in L^1_\loc(\rn)$.

Let $\Gamma(x)\equiv\{(y,\,t)\in\rn\times(0,\,\fz):\ |x-y|<t\}$ be
the \emph{cone with the vertex} $x\in\rn$. For all $f\in L^2(\rn)$ and
$x\in\rn$, the {\it ${L_1}$-adapted square function} $S_{L_1}f(x)$ is
defined by
\begin{eqnarray*}
S_{L_1}f(x)\equiv\lf\{\iint_{\Gamma(x)}|t^{2}{L_1}e^{-t^{2}{L_1}}f(y)|^2
\frac{dy\,dt}{t^{n+1}}\r\}^{1/2}.
\end{eqnarray*}

As in \cite{hlmmy,jy}, we define the Hardy space $H_{L_1}^p(\rn)$
associated to the operator ${L_1}$ as follows.

\begin{definition}[\cite{hlmmy,jy}]\label{d2.1}
Let $p\in(0,\,1]$ and ${L_1}$ be an operator defined in $L^2(\rn)$
satisfying the assumptions (A$_1$) and (A$_2$). A function $f\in
L^2(\rn)$ is said to be in $\mathbb{H}_{L_1}^p(\rn)$ if $S_{L_1}f\in
L_1^p(\rn)$; moreover, define
$\|f\|_{H_{L_1}^p(\rn)}\equiv\|S_{L_1}f\|_{{L}^p(\rn)}$. The {\it
Hardy space} $H_{L_1}^p(\rn)$ is then defined to be the completion
of $\mathbb{H}_{L_1}^p(\rn)$ with respect to the quasi-norm
$\|\cdot\|_{H_{L_1}^p(\rn)}$.
\end{definition}

For all $p\in(0,\,1]$ and $M\in\nn$, a function $a\in L^2(\rn)$ is
called a \emph{$(p,\,2,\,M)_{L_1}$-atom} if there exists a function
$b\in D(L_1^M)$  and a ball $B\equiv B(x_B,\,r_B)\subset\rn$ such
that
\vspace{-0.25cm}
\begin{enumerate}
\item[(i)] $a=L_1^M b$;
\vspace{-0.25cm}
\item[(ii)] for each $\ell\in\{0,\,1,\,\cdots,\,M\}$,
$\supp L_1^{\ell}b\subset B$;
\vspace{-0.25cm}
\item[(iii)] for all $\ell\in\{0,\,1,\,\cdots,\,M\}$,
\begin{eqnarray}\label{2.2}
\lf\|\lf(r_B^{2}L_1\r)^{k} b\r\|_{L^2(\rn)} \le
r_B^{2M+n(\frac{1}{2}-\frac{1}{p})}.
\end{eqnarray}
\end{enumerate}
\vspace{-0.25cm}

We then have the following atomic decomposition of
$H_{{L_1}}^p(\rn)$.

\begin{theorem}[\cite{hlmmy,jy}]\label{t2.1}
Let $p\in(0,\,1]$. Suppose that $M\in\nn$ and
$M>\frac{n}{2}(\frac{1}{p}-\frac{1}{2})$. Then for all $f\in
L^2(\rn)\cap H_{{L_1}}^p(\rn)$, there exist a sequence
$\{a_j\}_{j=0}^\fz$ of $(p,\,2,\,M)_{L_1}$-atoms and a sequence
$\{\lz_j\}_{j=0}^\fz$ of numbers such that
$f=\sum_{j=0}^\fz\lz_ja_j$ in both $H_{L_1}^p(\rn)$ and $L^2(\rn)$, and
$\|f\|_{H_{L_1}^p(\rn)}\sim\{\sum_{j=0}^\fz|\lz_j|^p\}^{1/p}$.
\end{theorem}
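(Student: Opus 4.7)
The plan is to follow the tent-space strategy of Coifman--Meyer--Stein adapted to the operator $L_1$, as carried out in \cite{hlmmy,jy}. Given $f\in L^2(\rn)\cap H_{L_1}^p(\rn)$, set $F(y,t)\equiv t^2L_1e^{-t^2L_1}f(y)$. A Fubini interchange on the cones $\Gamma(x)$ yields $\|F\|_{T_2^p(\rn\times(0,\fz))}\sim\|S_{L_1}f\|_{L^p(\rn)}=\|f\|_{H_{L_1}^p(\rn)}$, so $F$ lies in the tent space $T_2^p$. Invoking the atomic decomposition of $T_2^p$ (Coifman--Meyer--Stein for $p=1$, extended to $p\in(0,1)$ by Russ), one writes $F=\sum_{j=0}^\fz\lz_jA_j$ in $T_2^p$, where each $A_j$ is a $T_2^p$-atom supported in a tent $\widehat{B_j}$ over a ball $B_j\equiv B(x_{B_j},r_{B_j})$, satisfies $\|A_j\|_{T_2^2}\le|B_j|^{1/2-1/p}$, and $\{\sum_j|\lz_j|^p\}^{1/p}\ls\|F\|_{T_2^p}$.

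The second step is to invert $f\mapsto F$ by a Calder\'on-type reproducing formula. Since $L_1$ is nonnegative self-adjoint, the scalar identity $(M+1)!=\int_0^\fz s^{M+2}e^{-s}\frac{ds}{s}$ lifts via functional calculus to the $L^2(\rn)$-valid identity
\begin{eqnarray*}
f=c_M\int_0^\fz(t^2L_1)^{M+1}e^{-t^2L_1}F(\cdot,t)\frac{dt}{t},\qquad c_M\equiv\frac{2^{M+3}}{(M+1)!}.
\end{eqnarray*}
Substituting the tent-space decomposition of $F$ and exchanging summation with the integral, one obtains, formally, $f=\sum_j\lz_ja_j$ with
\begin{eqnarray*}
a_j\equiv c_M\int_0^{r_{B_j}}(t^2L_1)^{M+1}e^{-t^2L_1}\lf(A_j(\cdot,t)\r)\frac{dt}{t}=L_1^Mb_j,
\end{eqnarray*}
where $b_j\equiv c_M\int_0^{r_{B_j}}t^{2M+2}L_1e^{-t^2L_1}(A_j(\cdot,t))\frac{dt}{t}$; the truncation at $r_{B_j}$ uses $A_j(\cdot,t)=0$ for $t\ge r_{B_j}$.

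It remains to verify that, up to an absolute constant, each $a_j$ is a $(p,2,M)_{L_1}$-atom associated to $B_j$ and that the series converges in the prescribed senses. Condition (i) is built in. For the $L^2$-size condition (iii), I would pair $(r_{B_j}^2L_1)^kb_j$ against a unit vector $g\in L^2(\rn)$, shift powers of $L_1$ onto the heat semigroup via spectral calculus, and estimate by the Davies--Gaffney off-diagonal bound \eqref{2.1} combined with the Cauchy--Schwarz inequality on the tent support and the $T_2^2$-bound of $A_j$; the resulting scaling cleanly produces the factor $r_{B_j}^{2M+n(1/2-1/p)}$. The main obstacle is the strict support condition (ii): because $e^{-t^2L_1}$ does not preserve compact support, the $b_j$ above is \emph{a priori} only a molecule with rapid decay outside $B_j$. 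The standard remedy, used in \cite{hlmmy,jy}, is to replace $e^{-t^2L_1}$ in the reproducing formula by an even Schwartz spectral multiplier $\psi(t\sqrt{L_1})$ whose Fourier transform has compact support, and to invoke the finite propagation speed of $\cos(t\sqrt{L_1})$ (a consequence of the Davies--Gaffney assumption \eqref{2.1}) to confine $\supp L_1^\ell b_j$ to $B_j$. Finally, the $L^2(\rn)$- and $H_{L_1}^p(\rn)$-convergence of $\sum_j\lz_ja_j$ together with the quasi-norm equivalence reduce to the routine estimate $\|S_{L_1}a\|_{L^p(\rn)}\ls 1$ for every $(p,2,M)_{L_1}$-atom $a$, proved by splitting the tent integral defining $S_{L_1}a$ into the regions $4B$ and $(4B)^\complement$ and again applying off-diagonal estimates.
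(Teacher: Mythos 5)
Your reconstruction matches the tent-space argument of the cited references \cite{hlmmy,jy}, which the paper invokes without reproducing the proof. You correctly isolate the key technical point --- that securing the compact-support condition (ii) requires trading the heat semigroup in the reproducing formula for a spectral multiplier $\psi(t\sqrt{L_1})$ with compactly supported Fourier transform, together with the finite propagation speed of $\cos(t\sqrt{L_1})$ --- and the remaining verifications you sketch (the $T_2^p$-atom decomposition, off-diagonal estimates for the size condition, and the uniform bound $\|S_{L_1}a\|_{L^p(\rn)}\ls1$ over atoms) are exactly those carried out there.
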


For the second order divergence form operator, the associated Hardy
space were studied in \cite{hm09,hm09c,hmm,jy10}. More precisely,
let $L_{2}\equiv -\rm{div}(A\nabla )$ be a \emph{second order
divergence form elliptic operator with complex bounded measurable
coefficients}. We say that $L_{2}$ is {\it elliptic} if the matrix
$A\equiv\{a_{i,\,j}\}_{i,\,j=1}^n$ satisfying the \emph{elliptic
condition}, namely, there exist positive constants
$0<\lz\le\Lambda<\fz$ such that $\lz|\xi|^2\le \Re
(A\xi\cdot\bar\xi)$ and $|A\xi\cdot\bar\xi|\le\Lambda|\xi|^2$, where
for any $z\in\cc$, $\Re z$ denotes the \emph{real part} of $z$.

\begin{definition}[\cite{hm09,hmm,jy10}]\label{d2.2}
Let $p\in(0,\,1]$ and $L_2$ be the second order divergence form
elliptic operator with complex bounded measurable coefficients. A
function $f\in L^2(\rn)$ is said to be in $\mathbb{H}_{L_2}^p(\rn)$
if $S_{L_2}f\in L^p(\rn)$; moreover, define
$\|f\|_{H_{L_2}^p(\rn)}\equiv\|S_{L_2}f\|_{L^p(\rn)}$. The {\it
Hardy space} $H_{L_2}^p(\rn)$ is then defined to be the completion
of $\mathbb{H}_{L_2}^p(\rn)$ with respect to the quasi-norm
$\|\cdot\|_{H_{L_2}^p(\rn)}$.
\end{definition}

Recall that in \cite{hmm,jy10}, for all $p\in(0,\,1]$, $\ez\in(0,\,\fz)$ and
$M\in\nn$, a function $A\in L^2(\rn)$ is called an
$(H_{L_2}^p,\,\ez,\,M)$-{\it molecule} if there exists a ball
$B\equiv B(x_B,\,r_B)\subset\rn$ such that
\vspace{-0.25cm}
\begin{enumerate}
\item[(i)] for each $\ell\in\{1,\,\cdots,\,M\}$,
$A$ belongs to the range of ${L^\ell_2}$ in $L^2(\rn)$;
\vspace{-0.25cm}
\item[(ii)] for all $i\in\zz_+$ and $\ell\in\{0,\,1,\,\cdots,\,M\}$,
\begin{eqnarray}\label{2.3}
\lf\|\lf(r_B^{2}{L_2}\r)^{-\ell} A\r\|_{L^2(S_i(B))}
\le(2^ir_B)^{n(\frac{1}{2}-\frac{1}{p})}2^{-i\ez},
\end{eqnarray}
where $S_0(B)\equiv B$ and $S_i(B)\equiv 2^iB\setminus 2^{i-1}B$
for all $i\in\nn$.
\end{enumerate}
\vspace{-0.25cm}

Assume that $\{m_j\}_{j}$ is a sequence of
$(H_{L_2}^p,\,\ez,\,M)$-molecules and $\{\lz_j\}_{j}$ a sequence
of numbers satisfying $\sum_j|\lz_j|^p<\fz$. For any $f\in
L^2(\rn)$, if $f=\sum_{j} \lz_jm_j$ in $L^2(\rn)$, then $\sum_{j}
\lz_jm_j$ is called a {\it molecular}
$(H_{L_2}^p,\,2,\,\ez,\,M)$-{\it representation} of $f$. The {\it
molecular Hardy space $H_{{L_2},\,\mol,\,M}^p(\rn)$} is then defined
to be the completion of the space
$$\mathbb{H}_{{L_2},\,\mol,\,M}^p(\rn)\equiv\{f:\ f\ \text{ has a molecular}\
(H_{L_2}^p,\,2,\,\ez,\,M)\text{-representation}\}$$  with respect to
the quasi-norm
\begin{eqnarray*}
\|f\|_{H_{{L_2},\,\mol,\,M}^p(\rn)} \equiv&&\inf\lf\{\lf(\dsum_{j=0
}^\fz|\lz_j|^p\r)^{1/p}:\
f=\dsum_{j=0}^\fz\lz_jA_j \ \text{is a molecular}\r.\\
&&\hspace{2.8cm}(H_{L_2}^p,\,
2,\,\ez,\,M)\text{-representation}\Bigg\},
\end{eqnarray*}
where the infimum is taken over all the molecular $(H_{L_2}^p,\,2,\,
\ez,\,M)$-representations of $f$ as above.

We have the following molecular characterization of
$H_{{L_2}}^p(\rn)$.
\begin{theorem}[\cite{hmm,jy10}]\label{t2.2}
Let $p\in(0, 1]$. Suppose that $M>\frac{n}{2}(\frac{1}{p}-\frac{1}{2})$
and $\ez>0$. Then $H_{L_2}^p(\rn)=H_{{L_2},\,\mol,\,M}^p(\rn)$.
Moreover, $\|f\|_{H_{L_2}^p(\rn)}\sim\|f\|_{H_{{L_2},\,\mol,\,M}^p(\rn)}$,
where the implicit constants depend only on $M,\,n,\,p,\,\ez$ and
the constants appearing in the ellipticity.
\end{theorem}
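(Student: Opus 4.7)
The plan is to establish both inclusions $H_{{L_2},\,\mol,\,M}^p(\rn)\hookrightarrow H_{L_2}^p(\rn)$ and $H_{L_2}^p(\rn)\hookrightarrow H_{{L_2},\,\mol,\,M}^p(\rn)$, the principal tools being the $L^2$ off-diagonal estimates for $\{(t^2L_2)^k e^{-t^2L_2}\}_{t>0,\,k\in\nn}$ furnished by Lemma \ref{l2.1} and a tent-space atomic decomposition. For the first inclusion I would exploit the $p$-subadditivity of $\|\cdot\|_{L^p(\rn)}^p$ when $p\in(0,1]$ to reduce matters to the uniform estimate $\|S_{L_2}m\|_{L^p(\rn)}\ls 1$ for every $(H_{L_2}^p,\,\ez,\,M)$-molecule $m$ associated with a ball $B=B(x_B,r_B)$. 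Decomposing $\rn=4B\cup\bigcup_{k\ge 2}S_k(B)$, I would treat $4B$ via H\"older's inequality together with the $L^2$-boundedness of $S_{L_2}$ and \eqref{2.3} with $\ell=0$. On a far annulus $S_k(B)$ I would split the $t$-integration defining $S_{L_2}$ at $t=r_B$: when $t\le r_B$, the off-diagonal bound applied to $t^2L_2 e^{-t^2L_2}$ yields Gaussian decay $\exp\{-c(2^kr_B/t)^2\}$; when $t>r_B$, writing $m=(r_B^2L_2)^M[(r_B^2L_2)^{-M}m]$ transfers $M$ powers of $L_2$ onto the semigroup factor, producing polynomial decay $(r_B/t)^{2M}$. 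The hypothesis $M>\frac{n}{2}(\frac{1}{p}-\frac{1}{2})$ then secures $\ell^p$-summability over the annuli.

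For the reverse inclusion, which is the main analytic challenge, I would start with $f\in L^2(\rn)\cap H_{L_2}^p(\rn)$ and invoke the Calder\'on-type reproducing formula
\begin{equation*}
f=c_M\int_0^\fz(t^2L_2)^M e^{-t^2L_2}\lf[t^2L_2 e^{-t^2L_2}f\r]\,\frac{dt}{t}
\end{equation*}
valid in $L^2(\rn)$ for a suitable constant $c_M$ via the bounded $H_\fz$-functional calculus of $L_2$. The bracketed function $F(y,t)\equiv t^2L_2 e^{-t^2L_2}f(y)$ lies in the tent space $T_2^p(\rn\times(0,\fz))$ with $\|F\|_{T_2^p}\sim\|S_{L_2}f\|_{L^p(\rn)}$, so it admits a tent-atomic decomposition $F=\sum_j\lz_j A_j$, each $A_j$ supported in a tent $\widehat{B_j}$ over a ball $B_j$, satisfying $\|A_j\|_{L^2(\rn\times(0,\fz),\,dy\,dt/t)}\le|B_j|^{1/2-1/p}$ and $(\sum_j|\lz_j|^p)^{1/p}\ls\|f\|_{H_{L_2}^p(\rn)}$. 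Substitution yields $f=\sum_j\lz_j m_j$ with
\begin{equation*}
m_j\equiv c_M\int_0^\fz(t^2L_2)^M e^{-t^2L_2}A_j(\cdot,t)\,\frac{dt}{t}.
\end{equation*}

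The step I expect to be the main obstacle is verifying that each $m_j$ is, up to a bounded constant, an $(H_{L_2}^p,\,\ez,\,M)$-molecule associated with $B_j$. Membership in the range of $L_2^\ell$ is immediate from the factor $(t^2L_2)^M$. For the estimate \eqref{2.3}, I would control $\|(r_{B_j}^2L_2)^{-\ell}m_j\|_{L^2(S_i(B_j))}$ by duality against $\|g\|_{L^2(S_i(B_j))}\le 1$, then apply the off-diagonal $L^2$-bounds of Lemma \ref{l2.1} to the adjoint family $(t^2L_2^*)^{M-\ell}e^{-t^2L_2^*}$, exploiting that $A_j(\cdot,t)$ is supported in $B_j$ for $t\le r_{B_j}$ so that $\dist(\supp A_j(\cdot,t),\,S_i(B_j))\gtrsim 2^ir_{B_j}$ whenever $i\ge 2$. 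Splitting the $t$-integral at $t=r_{B_j}$ and pairing the Gaussian off-diagonal factor $\exp\{-c(2^ir_{B_j}/t)^2\}$ with the $L^2$-size bound of $A_j$ produces the required decay $2^{-i\ez}$ across annuli together with the prefactor $(2^ir_{B_j})^{n(1/2-1/p)}$; the hypothesis $M>\frac{n}{2}(\frac{1}{p}-\frac{1}{2})$ is precisely what ensures convergence of the $t$-integrals at both endpoints. Combining the two inclusions with the tent-atomic norm bound then yields the equivalence $\|f\|_{H_{L_2}^p(\rn)}\sim\|f\|_{H_{{L_2},\,\mol,\,M}^p(\rn)}$.
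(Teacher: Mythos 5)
The paper does not prove Theorem \ref{t2.2}; it is quoted as a known result from \cite{hmm} and \cite{jy10}, so there is no in-paper argument to compare against. That said, your outline reproduces the standard proof given in those references: one inclusion via a uniform bound $\|S_{L_2}m\|_{L^p(\rn)}\ls 1$ for each molecule $m$, obtained by splitting $\rn$ into annuli, splitting the $t$-integral in $S_{L_2}$, and using $L^2$ off-diagonal decay together with the factorization $m=(r_B^2L_2)^M(r_B^2L_2)^{-M}m$; the other inclusion via the Calder\'on reproducing formula, the tent-space atomic decomposition of $t^2L_2e^{-t^2L_2}f$, and off-diagonal estimates to verify that each resulting $m_j$ satisfies the molecular size bound \eqref{2.3}. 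Two small corrections to the write-up. First, the off-diagonal estimates you attribute to Lemma \ref{l2.1} are misattributed: that lemma concerns the Schr\"odinger operator $-\Delta+V$, whereas the estimates you need for $(t^2L_2)^ke^{-t^2L_2}$ are of the type recorded in Lemma \ref{l2.3} and its source \cite{hm}. Second, in the reverse inclusion each tent-space atom $A_j$ is supported in the tent over $B_j$, hence $A_j(\cdot,t)=0$ for $t>r_{B_j}$; thus the integral defining $m_j$ is automatically truncated to $t\in(0,r_{B_j})$ and there is nothing to split there — the split at $t=r_B$ belongs to the forward inclusion, where $t$ genuinely ranges over all of $(0,\fz)$. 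These are minor; the overall plan coincides with the argument in the cited sources.
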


We now recall the definition of the weak Hardy space (see, for example,
\cite{fs1,li,lu}). Let $p\in(0,\,1]$ and $\fai\in\mathcal{S}(\rn)$ with
support in the unit ball $B(0,\,1)$. The {\it weak Hardy space}
$WH^p(\rn)$ is defined to be the space
$$\lf\{f\in\mathcal{S}'(\rn):\ \|f\|_{WH^p(\rn)}\equiv
\sup_{\az>0}\lf(\az^p\lf|\lf\{x\in\rn:\ \sup_{t>0}\lf|\fai_t\ast
f(x)\r|>\az\r\}\r|\r)^{1/p}<\fz\r\}.$$

Let $L_1$ be a \emph{nonnegative self-adjoint operator} in $L^2(\rn)$
satisfying the assumptions (A$_1$) and (A$_2$). Following
\cite{al}, let the operator $D$ be a \emph{linear operator} defined densely
in $L^2(\rn)$ and satisfy the following \emph{assumptions}:

(B$_1$) $DL_1^{-1/2}$ is bounded on $L^2(\rn)$;

(B$_2$) the family of operators, $\{\sqrt{t}De^{-t{L_1}}\}_{t>0}$,
satisfy the Davies-Gaffney estimates as in \eqref{2.1};

(B$_3$) for all $(p,\,2,\,M)_{L_1}$-atoms $a$,
$\int_{\rn}DL_1^{-1/2}a(x)\,dx=0$.

Typical examples of $D$ and $L_1$ satisfying the assumptions (B$_1$),
(B$_2$) and (B$_3$) include that $D$ is the gradient operator $\nabla$
on $\rn$, and $L_1$ is the second order divergence form
elliptic operator with real symmetric bounded
measurable coefficients or the Schr\"odinger operator $-\bdz+V$
with $0\le V\in L^1_\loc(\rn)$ as proved below.

\begin{lemma}\label{l2.1}
Let $0\le V\in L_{\loc}^1(\rn)$. Then the Schr\"odinger operator
$T\equiv-\bdz+V$ satisfies the
assumptions (A$_1$) and (A$_2$), and both $T$ and the gradient operator $\nabla$
satisfy the assumptions (B$_1$), (B$_2$) and (B$_3$).
\end{lemma}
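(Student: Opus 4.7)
The plan is to verify each of (A$_1$), (A$_2$), (B$_1$), (B$_2$), (B$_3$) separately, with $L_1 := T = -\bdz + V$ and $D := \nabla$. For (A$_1$), I would invoke the classical Kato form method: since $V \ge 0$ is locally integrable, the nonnegative symmetric quadratic form
\begin{eqnarray*}
\mathcal{Q}(u,v) := \int_{\rn}\nabla u\cdot\overline{\nabla v}\,dx + \int_{\rn} V u\bar v\,dx,
\end{eqnarray*}
defined on $\mathcal{D}(\mathcal{Q}) := \{u\in W^{1,2}(\rn): V^{1/2}u\in L^2(\rn)\}$, is densely defined and closed in $L^2(\rn)$; by the first representation theorem it corresponds to a unique nonnegative self-adjoint operator $T$ extending $-\bdz + V$ on $C_c^\infty(\rn)$. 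For (A$_2$), I would use the Feynman--Kac / Trotter--Kato pointwise domination $|e^{-tT}f|(x)\le (e^{t\bdz}|f|)(x)$ a.e., which follows from $V\ge 0$ together with positivity of the heat kernel; combined with the explicit Gaussian kernel of $e^{t\bdz}$, a straightforward duality argument yields the full Gaussian off-diagonal bound and in particular \eqref{2.1}.

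For (B$_1$), the form identity
\begin{eqnarray*}
\|\nabla T^{-1/2}f\|_{L^2(\rn)}^2 \le \mathcal{Q}(T^{-1/2}f,T^{-1/2}f) = \langle T\,T^{-1/2}f,T^{-1/2}f\rangle = \|f\|_{L^2(\rn)}^2
\end{eqnarray*}
immediately gives $\|\nabla T^{-1/2}\|_{L^2\to L^2}\le 1$. For (B$_2$), I would run the standard twisted-semigroup argument: for $\alpha\in\rr$ and a bounded Lipschitz $\rho$ on $\rn$ with $\|\nabla\rho\|_\infty\le 1$, set $\phi(t) := e^{\alpha\rho}e^{-tT}(e^{-\alpha\rho}f)$; differentiating $\|\phi(t)\|_{L^2}^2$ and conjugating the form produces
\begin{eqnarray*}
\partial_t\|\phi(t)\|_{L^2}^2 + 2\|\nabla\phi(t)\|_{L^2}^2 + 2\|V^{1/2}\phi(t)\|_{L^2}^2 \le C\alpha^2\|\phi(t)\|_{L^2}^2,
\end{eqnarray*}
from which Gronwall and integration in $t$, followed by the optimal choice $\alpha\sim\dist(E,F)/t$ and $\rho(x)\sim\dist(x,E)\wg\dist(E,F)$, yield
\begin{eqnarray*}
\sqrt t\,\|\nabla e^{-tT}f\|_{L^2(F)} \le C_1\exp\{-[\dist(E,F)]^2/(C_2 t)\}\|f\|_{L^2(E)},
\end{eqnarray*}
which is exactly (B$_2$); this computation is standard (cf.\ \cite{hlmmy}).

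The main obstacle is (B$_3$). For a $(p,2,M)_T$-atom $a = T^M b$ with $b$ supported in a ball $B = B(x_B,r_B)$, the plan is to use the subordination representation $T^{-1/2}a = \pi^{-1/2}\int_0^\infty s^{-1/2}e^{-sT}a\,ds$; for each fixed $s>0$, the pointwise Gaussian domination from (A$_2$) shows that $e^{-sT}a\in C^1(\rn)$ with rapid spatial decay, so the divergence theorem on an expanding family of balls gives $\int_{\rn}\nabla e^{-sT}a(x)\,dx = 0$ componentwise; the target identity $\int_{\rn}\nabla T^{-1/2}a\,dx = 0$ then follows from Fubini. The delicate point, where all the real work lies, is to legitimize this interchange: I would decompose $\rn = \bigcup_{j\ge 0}S_j(B)$ and control $\|\nabla e^{-sT}a\|_{L^1(S_j(B))}$ via Cauchy--Schwarz together with (B$_2$) and the atomic bound \eqref{2.2}, while for large $s$ the analytic-semigroup bound $\|T^M e^{-sT}\|_{L^2\to L^2}\lesssim s^{-M}$ combined with $a = T^M b$ provides the extra $s$-decay needed for the tail to be absolutely convergent; the requirement $M > (n+2)/4$ provided by Theorem \ref{t2.1} is precisely what renders the resulting geometric series in $j$ and the $s$-integral simultaneously finite, yielding (B$_3$) and hence the lemma.
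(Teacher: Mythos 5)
Your proposal takes a genuinely different route from the paper: the paper's proof of this lemma is essentially a citation proof, whereas you reconstruct each assumption from first principles. The paper disposes of (A$_1$) with one sentence, proves (A$_2$) exactly as you do (Trotter--Kato pointwise domination by the Gaussian kernel), and then handles (B$_1$), (B$_2$), (B$_3$) entirely by reference to \cite[Lemma 8.5, (8.20)]{hlmmy} and \cite{jy}. Your Kato-form construction for (A$_1$), form identity for (B$_1$), and Davies twisted-semigroup sketch for (B$_2$) are all standard and correct, though for (B$_2$) the inequality you write controls the \emph{time-integral} of $\|\nabla\phi\|_{L^2}^2$, and one still needs analyticity (or the off-diagonal version of the Caccioppoli inequality, as in \cite[Lemma 8.5]{hlmmy}) to pass to a pointwise-in-time gradient bound; this is implicit in ``standard'' but worth stating.

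The part that needs repair is (B$_3$). Two issues. First, the Gaussian domination $|e^{-sT}f|\le e^{s\Delta}|f|$ controls the \emph{size} of $e^{-sT}a$, not its gradient, so the claim that it yields ``$e^{-sT}a\in C^1(\rn)$ with rapid spatial decay'' is unjustified and, for a general $V\in L^1_{\loc}$, false at that level of regularity; what one actually needs and can get is $e^{-sT}a\in W^{1,1}(\rn)$, which already implies $\int_{\rn}\nabla e^{-sT}a\,dx=0$. Second, and more seriously, the Fubini interchange you invoke fails as stated: for the annulus $S_0(B)$ the only available bound is $\|\nabla e^{-sT}a\|_{L^2}\lesssim s^{-1/2}\|a\|_{L^2}$, so
$$\int_0^{r_B^2} s^{-1/2}\,\|\nabla e^{-sT}a\|_{L^1(16B)}\,ds
\ \lesssim\ r_B^{n/2}\|a\|_{L^2}\int_0^{r_B^2}\frac{ds}{s}
\ =\ +\infty,$$
and the requirement $M>(n+2)/4$ does nothing near $s=0$ (it only helps for $s\gtrsim r_B^2$). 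The remedy is not to push the $x$-integral through the $s$-integral. Instead, show directly that $T^{-1/2}a\in W^{1,1}(\rn)$ by estimating both $\|T^{-1/2}a\|_{L^2(S_j(B))}$ and $\|\nabla T^{-1/2}a\|_{L^2(S_j(B))}$ via Lemma \ref{l2.3} (or via the truncated subordination integral combined with $a=T^Mb$ and $\|T^Me^{-sT}\|_{L^2\to L^2}\lesssim s^{-M}$), sum with Cauchy--Schwarz and the geometric decay in $j$, and then conclude $\int_{\rn}\nabla\bigl(T^{-1/2}a\bigr)\,dx=0$ from density of $C_c^\infty$ in $W^{1,1}(\rn)$; alternatively, one can truncate in $s$ and pass to the limit using that the annular $L^1$ tails are uniformly small. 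This is precisely the content that the paper delegates to \cite{hlmmy,jy}.
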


\begin{proof}
It is easy to see that $T$ is nonnegative
self-adjoint.

Let $e^{-t{T}}(\cdot,\,\cdot)$ be the \emph{integral kernel}
of the semigroup $e^{-t{T}}$. By Trotter's formula (see, for example,
\cite{tr}), we know that for all $t\in(0,\,\fz)$ and $x,\,y\in\rn$,
\begin{eqnarray*}
0\le e^{-t{T}}(x,\,y)\le
e^{-t\bdz}(x,\,y)\sim t^{-\frac{n}{2}}\exp\lf\{-\frac{|x-y|^2}{t}\r\},
\end{eqnarray*}
which implies that the semigroup $\{e^{-t{T}}\}_{t>0}$ satisfies
\eqref{2.1}. Thus, ${T}$ satisfies the assumptions (A$_1$) and
(A$_2$).

Moreover, by \cite[Lemma 8.5]{hlmmy}, we conclude that there
exists a positive constant $C_2$ such that for for all
closed sets $E$, $F\subset\rn$, $t\in(0,\,\fz)$ and $f\in L^2(\rn)$
supported in $E$,
\begin{eqnarray*}
\lf\|t\nabla e^{-t^2T}f\r\|_{L^2(F)}\ls
\exp\lf\{-\frac{[\dist(E,\,F)]^2}{C_2t^2}\r\}\|f\|_{L^2(E)},
\end{eqnarray*}
which, combining the $L^2(\rn)$-boundedness of the Riesz transform
$\nabla(T^{-1/2})$ (see \cite[(8.20)]{hlmmy}) and the fact that
$\int_{\rn}\nabla(T^{-1/2})a(y)\,dy=0$ (see, for example
\cite{hlmmy,jy}), implies that both $T$ and the gradient operator satisfy the
assumptions (B$_1$), (B$_2$) and (B$_3$).
This finishes the proof of Lemma \ref{l2.1}.
\end{proof}

We also need the following technical lemmas.
\begin{lemma}[\cite{lu,stw}]\label{l2.2}
Let $p\in(0,\,1)$ and $\{f_j\}_j$ be a sequence of measurable
functions. If $\sum_{j}\lf|\lz_j\r|^p<\fz$ and there exists a
positive constant $\wz C$ such that for all $\{f_j\}_j$ and $\az\in (0,\fz)$,
$|\{x\in\rn:\ |f_j|>\az\}|\le \wz C\az^{-p}$. Then, for all $\az\in(0,\fz)$,
\begin{eqnarray*}
\lf|\lf\{x\in\rn:\ \lf|\dsum_{j}\lz_jf_j(x)\r|>\az\r\}\r|\le
\wz C\frac{2-p}{1-p}\az^{-p}\sum_{j}|\lz_j|^p.
\end{eqnarray*}
\end{lemma}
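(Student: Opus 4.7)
My plan is to use a Calder\'on--Zygmund style truncation argument: at each level $\az\in(0,\fz)$ I split every term $\lz_j f_j$ into a ``good'' piece bounded pointwise by $\az$ and a ``bad'' piece supported where $|\lz_j f_j|>\az$. The desired weak-type bound for the whole series then reduces to a Chebyshev estimate in $L^1(\rn)$ for the sum of the good pieces and an elementary subadditivity estimate for the sum of the bad pieces.

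More precisely, fix $\az\in(0,\fz)$, set $g_j\equiv\lz_j f_j$, and write $g_j=h_j+b_j$, where $h_j\equiv g_j\chi_{\{|g_j|\le\az\}}$ and $b_j\equiv g_j\chi_{\{|g_j|>\az\}}$. At any point where $\dsum_j b_j=0$ one has $\dsum_j g_j=\dsum_j h_j$, so
$$\lf\{x\in\rn:\ \lf|\dsum_j g_j(x)\r|>\az\r\}\subseteq\lf\{x\in\rn:\ \lf|\dsum_j h_j(x)\r|>\az\r\}\cup\lf\{x\in\rn:\ \dsum_j b_j(x)\ne 0\r\}.$$
For the second set, subadditivity together with the hypothesis $|\{|f_j|>t\}|\le\wz C t^{-p}$ applied with $t=\az/|\lz_j|$ immediately gives the bound $\wz C\az^{-p}\dsum_j|\lz_j|^p$. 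For the first set, the layer-cake formula combined with the same weak-type hypothesis yields $\|h_j\|_{L^1(\rn)}\le\wz C|\lz_j|^p\az^{1-p}/(1-p)$, using that the integral $\int_0^\az t^{-p}\,dt$ converges precisely because $p<1$; summing in $j$ by the $L^1(\rn)$-triangle inequality and then invoking Chebyshev's inequality at height $\az$ produces the bound $\wz C(1-p)^{-1}\az^{-p}\dsum_j|\lz_j|^p$ for the first set.

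I do not anticipate any substantial obstacle; both estimates are essentially one-liners once the truncation has been introduced. The only point that requires a moment's care is the choice of the truncation level: truncating each $g_j$ at exactly $\az$ (rather than, say, $\az/2$) makes the two contributions combine as $\wz C\az^{-p}[1+(1-p)^{-1}]\dsum_j|\lz_j|^p=\wz C(2-p)(1-p)^{-1}\az^{-p}\dsum_j|\lz_j|^p$, reproducing the stated constant $\wz C(2-p)/(1-p)$ precisely; any other choice of truncation height would leave an additional factor depending on $p$.
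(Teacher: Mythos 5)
Your truncation argument is correct and reproduces the stated constant exactly: the decomposition $g_j=h_j+b_j$ at level $\az$, the union bound plus the hypothesis for the bad set (yielding $\wz C\az^{-p}\sum_j|\lz_j|^p$), and the layer-cake computation $\|h_j\|_{L^1}\le\wz C|\lz_j|^p\az^{1-p}/(1-p)$ followed by Chebyshev for the good set (yielding $\wz C(1-p)^{-1}\az^{-p}\sum_j|\lz_j|^p$) are all sound, and $1+(1-p)^{-1}=(2-p)/(1-p)$. The paper does not supply its own proof of this lemma but simply cites \cite{lu,stw}; the argument you give is precisely the classical Stein--Taibleson--Weiss proof that appears in those references, so there is no divergence of method to report.
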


\begin{lemma}[\cite{al,hm}]\label{l2.3}
Let $L_1$ be a nonnegative self-adjoint operator satisfying the
assumptions (A$_1$) and (A$_2$) and $D$
the operator satisfying the assumptions (B$_1$), (B$_2$) and (B$_3$).
Let $M\in\nn$. Then there exists a positive constant $C$, depending
on $M$, such that for all closed sets $E$, $F$ in $\rn$ with $\dist(E,\,F)>0$,
$f\in L^2(\rn)$ supported in $E$ and $t\in(0,\,\fz)$,
\begin{eqnarray}\label{2.4}
\lf\|DL_1^{-1/2}\lf(I-e^{-t{L_1}}\r)^{M}f\r\|_{L^2(F)}\le C
\lf(\frac{t}{\lf[\dist(E,\,F)\r]^{2}}\r)^M\|f\|_{L^2(E)}
\end{eqnarray}
and
\begin{eqnarray}\label{2.5}
\lf\|DL_1^{-1/2}\lf(t{L_1}e^{-t{L_1}}\r)^{M}f\r\|_{L^2(F)}\le C
\lf(\frac{t}{\lf[\dist(E,\,F)\r]^{2}}\r)^M\|f\|_{L^2(E)}.
\end{eqnarray}
Moreover, if $L_2$ is a second order divergence form elliptic
operator with complex bounded measurable coefficients, then
\eqref{2.4} and \eqref{2.5} still hold when $D$ and $L_1$ are
replaced, respectively, by the gradient operator $\nabla$ and $L_2$.
\end{lemma}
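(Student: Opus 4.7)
The plan is to combine the subordination formula $L_1^{-1/2}=\pi^{-1/2}\int_0^{\fz} e^{-sL_1}s^{-1/2}\,ds$, valid on $L^2(\rn)$ since $L_1$ is nonnegative self-adjoint, with an integral representation of the polynomial factors, and reduce matters to an $L^2$ off-diagonal estimate for the family $\{DL_1^M e^{-vL_1}\}_{v>0}$.

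For \eqref{2.4}, I would iterate the identity $I-e^{-tL_1}=\int_0^t L_1 e^{-uL_1}\,du$ to obtain
\begin{eqnarray*}
(I-e^{-tL_1})^M=\int_{[0,t]^M} L_1^M e^{-(u_1+\cdots+u_M)L_1}\,du_1\cdots du_M,
\end{eqnarray*}
which exposes the factor $t^M$. Combined with the subordination formula and Fubini, the left-hand side of \eqref{2.4} becomes $\pi^{-1/2}\int_0^{\fz}\int_{[0,t]^M} DL_1^M e^{-vL_1}f\,du\,s^{-1/2}\,ds$ with $v:=s+u_1+\cdots+u_M$. The key off-diagonal ingredient is then
\begin{eqnarray*}
\lf\|DL_1^M e^{-vL_1}f\r\|_{L^2(F)}\ls v^{-M-1/2}\exp\lf\{-\frac{[\dist(E,F)]^2}{Cv}\r\}\|f\|_{L^2(E)},
\end{eqnarray*}
which I would derive by writing $DL_1^M e^{-vL_1}=(De^{-(v/2)L_1})(L_1^M e^{-(v/2)L_1})$, inserting a cutoff along a set $\wz E$ with $\dist(E,\wz E^\complement)\sim\dist(\wz E,F)\sim\dist(E,F)$, and applying (B$_2$) to $De^{-(v/2)L_1}$ together with the Davies-Gaffney estimate for $L_1^M e^{-(v/2)L_1}$ (the latter follows from (A$_2$) via Cauchy's integral formula for the analytic semigroup). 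Substituting, setting $d:=\dist(E,F)$, performing the $s$-integral first via the change of variables $w=s+u_1+\cdots+u_M$, and using the elementary bound $w^M e^{-cd^2/w}\le C_M d^{2M}$, one obtains an inner bound uniform in $u\in[0,t]^M$ of order $d^{-2M}$; the outer $u$-integration then yields the required factor $t^M$, giving \eqref{2.4}.

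For \eqref{2.5}, since $L_1$ and $e^{-tL_1}$ commute, $(tL_1 e^{-tL_1})^M=t^M L_1^M e^{-MtL_1}$, and one more subordination step gives $DL_1^{-1/2}(tL_1 e^{-tL_1})^M f=\pi^{-1/2}t^M\int_0^{\fz} DL_1^M e^{-(s+Mt)L_1}f\,s^{-1/2}\,ds$. Plugging in the off-diagonal estimate for $DL_1^M e^{-vL_1}$ with $v=s+Mt$ and integrating produces a bound of the form $C\exp(-d^2/Ct)\|f\|_{L^2(E)}$, majorized by $C(t/d^2)^M\|f\|_{L^2(E)}$ via $x^M e^{-x}\le C_M$. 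The replacement $(L_1,D)\mapsto(L_2,\nabla)$ in the last sentence of the lemma is routine: the analyticity on $L^2(\rn)$, the Davies-Gaffney estimates for $\{e^{-tL_2}\}_{t>0}$ and $\{\sqrt{t}\nabla e^{-tL_2}\}_{t>0}$, and the $L^2$-boundedness of $\nabla L_2^{-1/2}$ are all well-known for $L_2$ (see, e.g., \cite{hmm,jy10}). The main obstacle is the careful derivation of the composition off-diagonal estimate for $DL_1^M e^{-vL_1}$ with the correct $v^{-M-1/2}$ scaling and Gaussian tail: composing two Davies-Gaffney families across an intermediate set necessarily worsens the constant in the exponent, and one must verify that the resulting Gaussian still absorbs enough to yield the polynomial decay $(t/d^2)^M$ after the subsequent integrations.
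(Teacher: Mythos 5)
The paper itself does not prove this lemma; it cites it from [al,hm], so there is no "paper proof" to compare against. Your proposal, however, is essentially the standard argument for such off-diagonal Riesz-transform estimates, and its main structure is correct: subordination $L_1^{-1/2}=\pi^{-1/2}\int_0^\infty e^{-sL_1}s^{-1/2}\,ds$, the iterated identity $(I-e^{-tL_1})^M=\int_{[0,t]^M}L_1^M e^{-(u_1+\cdots+u_M)L_1}\,du$ which exposes $t^M$, and the composition off-diagonal estimate
\begin{equation*}
\bigl\|DL_1^M e^{-vL_1}f\bigr\|_{L^2(F)}\lesssim v^{-M-1/2}\exp\Bigl\{-\frac{[\dist(E,F)]^2}{Cv}\Bigr\}\|f\|_{L^2(E)},
\end{equation*}
which you derive correctly by factoring $DL_1^M e^{-vL_1}=\bigl(De^{-(v/2)L_1}\bigr)\bigl(L_1^M e^{-(v/2)L_1}\bigr)$, splitting through an intermediate set $\wz E$, using (B$_2$) for the first factor and the Davies-Gaffney estimate for $(vL_1)^M e^{-vL_1}$ (which follows from analyticity plus (A$_2$) via Cauchy's formula) for the second. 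Setting $v=s+u_1+\cdots+u_M$ and pulling $t^M$ from the $u$-integral in the second case handles \eqref{2.5}.

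One step is stated imprecisely enough that, taken literally, it fails. The "elementary bound $w^M e^{-cd^2/w}\le C_M d^{2M}$" is false (the left side is increasing and unbounded in $w$); you presumably meant $w^{-M}e^{-cd^2/w}\le C_M d^{-2M}$. But even this corrected bound, plugged in crudely, gives an inner $s$-integral
\begin{equation*}
d^{-2M}\int_u^\infty w^{-1/2}(w-u)^{-1/2}\,dw,
\end{equation*}
which diverges at $w\to\infty$. The remedy is to retain the full weight $w^{-M-1/2}e^{-cd^2/w}(w-u)^{-1/2}$ and split the range (for instance at $w=d^2$): on $\{w\le d^2\}$ use $e^{-cd^2/w}\lesssim (w/d^2)^{M+1}$ to get an integrable singularity near $w=u$ and a contribution $\lesssim d^{-2M}$; on $\{w\ge d^2\}$ drop the exponential and use $w^{-M-1/2}(w-u)^{-1/2}\lesssim w^{-M-1}$ for $w\ge 2u$, again giving $\lesssim d^{-2M}$. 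After rescaling $w\mapsto d^2w$ this amounts to showing $\int_0^\infty(\sigma+\mu)^{-M-1/2}e^{-c/(\sigma+\mu)}\sigma^{-1/2}\,d\sigma\lesssim 1$ uniformly in $\mu\ge0$, which is true but requires exactly this kind of case split rather than a single pointwise bound. You do flag this subtlety in your last paragraph, so I would classify this as a gap in the write-up rather than in your understanding; with that integral estimate done carefully, the proof is complete, and the transfer to $(\nabla,L_2)$ is, as you say, routine from the corresponding off-diagonal estimates in \cite{hmm,jy10}.
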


\section{Proofs of main results}\label{s3}

\hskip\parindent In this section, we show Theorem \ref{t1.1}, Corollary
\ref{c1.1} and Theorem \ref{t1.2}.

\begin{proof}[Proof of Theorem \ref{t1.1}]
Let $p\equiv\frac{n}{n+1}$. By the density of $H^p_{L_1}(\rn)\cap L^2(\rn)$
in $H^p_{L_1}(\rn)$, we only need consider $f\in H^p_{L_1}(\rn)\cap L^2(\rn)$.
Let $M\in\nn$ and $M>\max\{\frac 12+\frac n4,\,1\}$. By Theorem \ref{t2.1},
we know that there exist a sequence
$\{a_j\}_j$ of $(p,\,2,\,M)_{L_1}$-atoms and a sequence
$\{\lz_j\}_j$ of numbers such that
\begin{equation}\label{3.1}
f=\sum_j\lz_j a_j\
\end{equation}
in $L^2(\rn)$ and $\|f\|_{H^p_{L_1} (\rn)}\sim\{\sum_{j}|\lz_j|^p\}^{1/p}$. To
show Theorem \ref{t1.1}, by \eqref{3.1} and the definition
of $WH^p(\rn)$, we see that it
suffices to prove that for all $\az\in(0,\fz)$,
\begin{equation}\label{3.2}
\lf|\lf\{x\in\rn:\ \sup_{0<t<\fz}\lf|\fai_t\ast\lf(\sum_j\lz_jD
L_1^{-1/2}a_j\r)(x)\r|>\az\r\}\r|\ls\frac 1{\az^p}\sum_{j}|\lz_j|^p,
\end{equation}
where $\fai\in C^{\fz}_{c}(\rn)$ satisfies $\supp\fai\subset
B(0,1)$, and for all $x\in\rn$ and $t\in(0,\fz)$,
$\fai_t(x)\equiv\frac{1}{t^n}\fai(\frac{x}{t})$. In order to prove
\eqref{3.2}, by Lemma \ref{l2.2}, it suffices to show that for any
$(p,\,2,\,M)_{L_1}$-atom $a$ associated with the ball $B\equiv B(x_B,r_B)$
and $\az\in(0,\fz)$,
\begin{equation*}
\lf|\lf\{x\in\rn:\ \sup_{0<t<\fz}\lf|\fai_t\ast\lf(D
L_1^{-1/2}a\r)(x)\r|>\az\r\}\r|\ls\frac{1}{\az^p}.
\end{equation*}

Let $\mathcal{M}$ be the \emph{Hardy-Littlewood maximal function}. It is
easy to see that
$$\sup_{0<t<\fz}\lf|\fai_t\ast(D
L_1^{-1/2}a)\r|\ls\mathcal{M}(D L_1^{-1/2}a).$$
Then by Chebyshev's inequality, H\"older's inequality, the
$L^2(\rn)$-boundedness of $\mathcal{M}$, the $L^2(\rn)$-boundedness
of $D L_1^{-1/2}$ via (B$_1$), and \eqref{2.2}, we know that
\begin{eqnarray*}
&&\lf|\lf\{x\in 16B:\ \sup_{0<t<\fz}\lf|\fai_t\ast\lf(D
L_1^{-1/2}a\r)(x)\r|>\az\r\}\r|\\ \nonumber
&&\hs\ls\frac{1}{\az^p}\lf\|\sup_{0<t<\fz}\lf|\fai_t\ast\lf(D
L_1^{-1/2}a\r)\r|\r\|^p_{L^p(16B)}
\ls\frac{1}{\az^p}\lf\|\cm\lf(D
L_1^{-1/2}a\r)\r\|^p_{L^p(16B)}\\ \nonumber
&&\hs\ls\frac{1}{\az^p}\lf\|\cm\lf(D
L_1^{-1/2}a\r)\r\|^p_{L^2(\rn)}|B|^{1-\frac{p}{2}}
\hs\ls\frac{1}{\az^p}\|a\|^p_{L^2(\rn)}|B|^{1-\frac{p}{2}}\ls
\frac{1}{\az^p}.
\end{eqnarray*}
On the other hand, we have
\begin{eqnarray*}
&&\lf\{x\in (16B)^\complement:\
\sup_{0<t<\fz}\lf|\fai_t\ast\lf(D L_1^{-1/2}a\r)(x)\r|>\az\r\}\\
\nonumber &&\hs\subset\lf\{x\in (16B)^\complement:\
\sup_{0<t<r_B}\lf|\fai_t\ast\lf(D L_1^{-1/2}a\r)(x)\r|>\az/2\r\}\\
\nonumber&&\hs\hs\bigcup\lf\{x\in
(16B)^\complement:\ \sup_{r_B<t<\fz}\lf|\cdots\r|>\az/2\r\}
\equiv \rm{I}\cup\rm{J}.
\end{eqnarray*}

To estimate $\rm{I}$, let $S_i(B)\equiv 2^iB\setminus2^{i-1}B$ and
$\wz{S}_i(B)\equiv 2^{i+1}B\setminus2^{i-2}B$ with $i\in\nn$. For
all $i\ge5$, $x\in S_i(B)$ and $y\in B(x,\,r_B)$, from
$\supp \fai\subset B(0,\,1)$, it follows that $y\in\wz{S}_i(B)$. For
$i\ge5$, let
$$\mathrm{I}_i\equiv\lf\{x\in S_i(B):\ \sup_{0<t<r_B}\lf|\fai_t\ast
\lf(D L_1^{-1/2}a\r)(x)\r|>\az/2\r\}.$$
By Chebyshev's inequality, H\"older's inequality, the $L^2(\rn)$-boundedness of
$\cm$, Lemma \ref{l2.3} and \eqref{2.2}, we conclude that
\begin{eqnarray*}
|\mathrm{I}_i|&&\ls\az^{-p}\dint_{S_i(B)}\lf[\sup_{0<t<r_B}\lf|
\dint_{\wz{S}_i(B)}t^{-n}\fai \lf(\frac{x-y}{t}\r)\lf[\chi_{\wz
{S}_i(B)}(y)D
L_1^{-1/2}a(y)\r]\,dy\r|\r]^p\,dx\\
&&\ls\az^{-p}\dint_{S_i(B)}\lf[\cm\lf(\chi_{\wz {S}_i(B)}D
L_1^{-1/2}a\r)(x)\r]^p\,dx\\&&\ls\az^{-p}|S_i(B)|^{1-p/2}\lf\|D
L_1^{-1/2}a\r\|_{L^2(\wz {S}_i(B))}^{p}\\
&&\ls\az^{-p}|S_i(B)|^{1-p/2}\lf[\lf\|D
L_1^{-1/2}\lf(I-e^{-r_B^2{L_1}}\r)^Ma\r\|_{L^2(\wz
{S}_i(B))}^{p}\r.\\
&&\lf.\hs+\dsum_{k=1}^M\lf\|D
L_1^{-1/2}\lf(r_B^2L_1e^{-\frac{k}{M}r_B^2{L_1}}\r)^Mr_B^{-2M}b\r\|_{L^2(\wz
{S}_i(B))}^{p}\r]\\
&&\ls\az^{-p}|S_i(B)|^{1-p/2}\lf[\frac{r_B^2}{(2^ir_B)^2}\r]^{Mp}|B|^{p/2-1}
\sim2^{-i[2Mp-n(1-p/2)]}\az^{-p}.
\end{eqnarray*}
From this, the definition of $\mathrm{I}_i$, $p=\frac{n}{n+1}$ and
$M>\frac 12+\frac n4$, we deduce that
$|\mathrm{I}|\ls\sum_{i=1}^\fz|\mathrm{I}_i|\ls\frac{1}{\az^p}$,
which is a desired estimate for $\mathrm{I}$.

To estimate $\mathrm{J}$,  by the assumption that $\int_{\rn}D
L_1^{-\frac{1}{2}}a(y)\,dy=0$ via (B$_3$), we know that
\begin{eqnarray*}
|\mathrm{J}|\ls&&\Bigg|\Bigg\{x\in (16B)^\complement:\\
&&\quad \dsum_{i=0}^\fz \sup_{r_B<t<\fz}\lf.\lf.
\lf|\int_{S_i(B)}\frac{1}{t^n}\lf[\fai \lf(\frac{x-y}{t}\r)-\fai
\lf(\frac{x-x_B}{t}\r)\r]D
L_1^{-\frac{1}{2}}a(y)\,dy\r|>\az/2\r\}\r|.
\end{eqnarray*}
Let $F_i(x)\equiv\sup_{r_B<t<\fz} |\int_{S_i(B)}\frac{1}{t^n}[\fai
(\frac{x-y}{t})-\fai (\frac{x-x_B}{t})]D
L_1^{-\frac{1}{2}}a(y)\,dy|$ and
$$\mathrm{J}_i\equiv\lf\{x\in
(16B)^\complement: \ F_i(x)>\az/2\r\}.$$
To obtain a desired estimate for $\mathrm{J}$, by Lemma \ref{l2.2},
it suffices to show that there
exists a positive constant $C_0$ such that
\begin{eqnarray}\label{3.3}
|\mathrm{J}_i|\ls\frac{2^{-C_0i}}{\az^p}.
\end{eqnarray}
From the mean value theorem, H\"older's inequality, $\supp
\fai\subset B(0,\,1)$, Lemma \ref{l2.3} and \eqref{2.2}, we infer
that
\begin{eqnarray*}
F_i(x)&&\le\sup_{j\in\zz_+}\sup_{2^jr_B\le
t<2^{j+1}r_B}\chi_{(2^{i+1}+2^{j+1})B}(x)\int_{S_i(B)}\frac{1}{t^n}\|\nabla
\fai\|_{L^\fz(\rn)}\lf|\frac{y-x_B}{t}\r|\lf|D
L_1^{-\frac{1}{2}}a(y)\r|\,dy\\
&&\ls\sup_{j\in\zz_+}\chi_{(2^{i+1}+2^{j+1})B}(x)\sup_{2^jr_B\le
t<2^{j+1}r_B}2^{-j(n+1)} |B|^{-1}2^{i}|S_i(B)|^{1/2}\\
&&\hs\times \|D
L_1^{-\frac{1}{2}}a\|_{L^2(S_i(B))}\\
&&\ls\sup_{j\in\zz_+}\chi_{(2^{i+1}+2^{j+1})B}(x)\sup_{2^jr_B\le
t<2^{j+1}r_B}2^{-j(n+1)}
2^{i(n/2+1)}\lf[\frac{r_B^2}{(2^ir_B)^2}\r]^M|B|^{-1/p}
\\
&&\equiv C_3\sup_{j\in\zz_+}\chi_{(2^{i+1}+2^{j+1})B}(x)\sup_{2^jr_B\le
t<2^{j+1}r_B}2^{-j(n+1)} 2^{-i(2M-n/2-1)}|B|^{-1/p}.
\end{eqnarray*}

Let
$$j_0\equiv\max\lf\{j\in\zz_+:\ C_32^{-j(n+1)}
2^{-i(2M-n/2-1)}|B|^{-1/p}>\az/2\r\}.$$
For all $x\in [(2^{i+1}+2^{j_0+1})B]^\complement$, we see that
\begin{eqnarray*}
F_i(x)\le C_3\sup_{j\ge
j_0}\chi_{(2^{i+1}+2^{j+1})B}(x)\sup_{2^jr_B\le
t<2^{j+1}r_B}2^{-j(n+1)} 2^{-i(2M-n/2-1)}|B|^{-1/p}\le\az/2,
\end{eqnarray*}
which implies that $x\in\mathrm{J}_i^{\complement}$. Thus,
$\mathrm{J}_i\subset(2^{i+1}+2^{j_0+1})B$. From this and Chebyshev's
inequality, we then deduce that
\begin{eqnarray*}
|\mathrm{J}_i|\ls\az^{-p}\dint_{(2^{i+1}+2^{j_0+1})B}
2^{-pj_0(n+1)}2^{-ip(2M-1+n)}|B|^{-1}dx\ls2^{-i[(2M-1)p-n(1-p)]}\az^{-p},
\end{eqnarray*}
which implies that \eqref{3.3} holds with $C_0\equiv(2M-1)p-n(1-p)$.
Observe that $C_0>0$, since $M>1$ and $p=\frac{n}{n+1}$. Thus,
combining the estimate of $\mathrm{I}$ and $\mathrm{J}$, we then
complete the proof of Theorem \ref{t1.1}.
\end{proof}

\begin{proof}[Proof of Corollary \ref{c1.1}]
From Lemma \ref{l2.1}, we deduce that the Schr\"odinger operator
$-\bdz+V$ with $0\le V\in L^1_{\loc}(\rn)$ satisfies the assumptions
(A$_1$) and (A$_2$) as in Section \ref{s2},  and both $-\bdz+V$ and
the gradient operator $\nabla$ satisfy the assumptions (B$_1$),
(B$_2$) and (B$_3$) as in Section \ref{s2}. Thus, from Theorem
\ref{t1.1}, we deduce that the Riesz transform
$\nabla(-\bdz+V)^{-1/2}$ is bounded from $H_{-\bdz+V}^p(\rn)$ to the
classical weak Hardy space $WH^p(\rn)$ in the critical case that
$p=n/(n+1)$, which completes the proof of Corollary \ref{c1.1}.
\end{proof}

\begin{proof}[Proof of Theorem \ref{t1.2}]
Let $p=\frac{n}{n+1}$ and $M\in\nn$ satisfy
$M>\frac{n}{4}+\frac{1}{2}$.
To prove Theorem \ref{t1.2}, similar to the proof of Theorem
\ref{t1.1}, by Theorem \ref{t2.2} and Lemma \ref{l2.2},
for each $(H_L^p,\,\ez,\,M)$-molecule $A$ associated to
the ball $B(x_B,\,r_B)$, $m\in\zz_{+}$ and $\az\in(0,\,\fz)$, we
only need estimate the measure of the following sets:
\begin{eqnarray*}
\wz{\rm{I}}\equiv\lf\{x\in (16B)^\complement: \
\sup_{0<t<r_B}\lf|\fai_{t}*(\nabla  L_2^{-1/2}A)(x)\r|>\az/2\r\}
\end{eqnarray*}
and
\begin{eqnarray*}
\wz{\rm{J}}\equiv\lf\{x\in (16B)^\complement: \ \sup_{r_B\le
t<\fz}\lf|\fai_{t}*\lf(\nabla L_2^{-1/2}A\r)(x)\r|>\az/2\r\}.
\end{eqnarray*}
The estimate of $\mathrm{\wz{I}}$ is similar to that of $\mathrm{I}$
in the proof of Theorem \ref{t1.1}. We omit the details. Now we
estimate $\mathrm{\wz{J}}$. Since
\begin{eqnarray*}
|\wz{\mathrm{J}}|&&\ls\Bigg|\Bigg\{x\in (16B)^\complement:\
\dsum_{i=0}^\fz \sup_{r_B\le t<\fz}\lf.\lf.
\lf|\int_{S_i(B)}\frac{1}{t^n}\lf[\fai \lf(\frac{x-y}{t}\r)-\fai
\lf(\frac{x-x_B}{t}\r)\r]\r.\r.\r.\\
&&\hs\times\nabla
L_2^{-\frac{1}{2}}\lf(I-e^{-r_B^2L_2}\r)^MA(y)\,dy\Bigg|>\az/2\Bigg\}\Bigg|\\
&&\hs+\Bigg|\Bigg\{x\in (16B)^\complement:\ \dsum_{i=0}^\fz
\sum_{k=1}^M\sup_{r_B\le t<\fz}\lf.\lf. \lf|\int_{S_i(B)}\frac{1}{t^n}\lf[\fai
\lf(\frac{x-y}{t}\r)-\fai
\lf(\frac{x-x_B}{t}\r)\r]\r.\r.\r.\\
&&\hs\times\nabla L_2^{-\frac{1}{2}}(r_B^2L_2e^{-\frac{k}
{M}r_B^2L_2})^M(r_B^2L_2)^{-M}A(y)\,dy\Bigg|>\az/2\Bigg\}\Bigg|.
\end{eqnarray*}
Let $\wz{F_{1,i}}(x)\equiv \sup_{r_B\le
t<\fz}|\int_{S_i(B)}\frac{1}{t^n}[\fai
(\frac{x-y}{t})-\fai(\frac{x-x_B}{t})]\nabla
L_2^{-\frac{1}{2}}(I-e^{-r_B^2L_2})^MA(y)\,dy|$,
\begin{eqnarray*}
\wz{F_{2,i}}(x)\equiv&&\sum^M_{k=1}\sup_{r_B\le
t<\fz}\Bigg|\int_{S_i(B)}\frac{1}{t^n}\lf[\fai
\lf(\frac{x-y}{t}\r)-\fai\lf(\frac{x-x_B}{t}\r)\r]\\
&&\times\nabla L_2^{-\frac{1}{2}}\lf(r_B^2L_2
e^{-\frac{k}{M}r_B^2L_2}\r)^M(r_B^2L_2)^{-M}A(y)\,dy\Bigg|,
\end{eqnarray*}
$\wz{\mathrm{J}_{1,k}}\equiv\{x\in(16B)^\complement: \
\wz{F_{1,i}}(x)>\az/2\}$ and
$\wz{\mathrm{J}_{2,k}}\equiv\{x\in(16B)^\complement: \
\wz{F_{2,i}}(x)>\az/2\}$. By Lemma \ref{l2.2}, it suffices to show
that there exist positive constants $C_4$ and $C_5$  such that for
all $\az\in(0,\,\fz)$,
$|\wz{\mathrm{J}_{1,k}}|\ls\frac{2^{-C_4i}}{\az^p}$ and
$|\wz{\mathrm{J}_{2,k}}|\ls\frac{2^{-C_5i}}{\az^p}$. We only prove
the first inequality, the proof of the second inequality is similar.
Take $\ez\in(n+1-1/(n+1),\,\fz)$.  By the mean value theorem,
H\"older's inequality, Lemma \ref{l2.3}, \eqref{2.3} and $\supp
\fai\subset B(0,\,1)$, we conclude that
\begin{eqnarray*}
\wz{F_{1,i}}(x)\ls&&\sup_{j\in\zz_+}\chi_{(2^{i+1}+2^{j+1})B}(x)\sup_{2^jr_B\le
t<2^{j+1}r_B}\int_{S_i(B)}\frac{1}{t^n}\|\nabla
\fai\|_{L^\fz(\rn)}\lf|\frac{y-x_B}{t}\r|\\
&&\times\lf|\nabla
L_2^{-\frac{1}{2}}\lf(I-e^{-r_B^2L_2}\r)^MA(y)\r|\,dy\\
\ls&&\sup_{j\in\zz_+}\chi_{(2^{i+1}+2^{j+1})B}(x)\sup_{2^jr_B\le
t<2^{j+1}r_B}\int_{S_i(B)}\frac{1}{t^n}\|\nabla
\fai\|_{L^\fz(\rn)}\lf|\frac{y-x_B}{t}\r|\\
&&\times\lf|\nabla
L_2^{-\frac{1}{2}}\lf(I-e^{-r_B^2L_2}\r)^M(\chi_{\wz{S}_i(B)}A)(y)\r|\,dy \\
&&+\sup_{j\in\zz_+}\chi_{(2^{i+1}+2^{j+1})B}(x)\sup_{2^jr_B\le
t<2^{j+1}r_B}\int_{S_i(B)}\frac{1}{t^n}\|\nabla
\fai\|_{L^\fz(\rn)}\lf|\frac{y-x_B}{t}\r|\\
&&\times \lf|\nabla L_2^{-\frac{1}{2}}\lf(I-e^{-r_B^2L_2}\r)^M
(\chi_{\rn\setminus\wz{S}_i(B)}A)(y)\r|\,dy\\
\ls&&\sup_{j\in\zz_+}\chi_{(2^{i+1}+2^{j+1})B}(x)\\
&&\times\sup_{2^jr_B\le
t<2^{j+1}r_B}2^{-j(n+1)}
\lf[2^{-i(\ez+n/p-n-1)}+2^{-i(2M-n/2-1)}\r]|B|^{-1/p},
\end{eqnarray*}
where ${S}_i(B)$ and $\wz{S}_i(B)$ are as in the proof of Theorem
\ref{t1.1}. The rest of the proof is similar to that of Theorem
\ref{t1.1}; we omit the details. This finishes the proof of Theorem
\ref{t1.2}.
\end{proof}

\section{Further remarks}\label{s4}

\hskip\parindent In this section, we establish a variant of Theorems
\ref{t1.1} and \ref{t1.2} for the higher order divergence form
elliptic operators with complex bounded measurable coefficients and
the higher order Schr\"odinger-type operators.

To this end, we first
recall some notion and notations. For $\tz\in[0,\,\pi)$, the {\it
closed sector, $S_\tz$, of angle $\tz$} in the complex plane $\cc$
is defined by $S_\tz\equiv\lf\{z\in\cc\setminus\{0\}:\ |\arg
z|\le\tz\r\}\cup\lf\{0\r\}$. Let $\omega\in[0,\,\pi)$. A closed
operator $T$ in $L^2(\rn)$ is called of {\it type} $\omega$ (see,
for example, \cite{mc}), if its spectrum, $\sz(T)$, is contained in
$S_\omega$, and for each $\tz\in (\omega,\,\pi)$, there exists a
nonnegative constant $C$ such that for all $z\in\cc\setminus S_\tz$,
$\|(T-zI)^{-1}\|_{\cl(L^2(\rn))}\le C|z|^{-1}$, where and in what
follows, $\|S\|_{\cl(\mathcal H)}$ denotes the {\it operator norm}
of the linear operator $S$ on the normed linear space $\mathcal H$.
Let $T$ be a one-to-one operator of type $\omega$, with
$\omega\in[0,\,\pi)$ and $\mu\in(\omega,\,\pi)$, and $f\in
H_{\fz}(S_\mu^0)\equiv \{f\ \text{is holomorphic on}\ S_\mu^0:\
\|f\|_{L^{\fz}(S^0_{\mu})}<\fz\}$, where $S_\mu^0$ denotes the
\emph{interior} of $S_\mu$.  By the $H_\fz$ functional calculus, the
function of the operator $T$, $f(T)$ is well defined. The operator
$T$ is said to have a {\it  bounded $H_\fz$ functional calculus} in
the Hilbert space $\mathcal{H}$, if there exist $\mu\in (0,\,\pi)$
and positive constant $C$ such that for all $\psi\in
H_\fz(S_\mu^0)$, $\|\psi(T)\|_{\cl(\mathcal{H})}\le
C\|\psi\|_{L^\fz(S_\mu^0)}$.

As in \cite{cy},  let $T$ be an operator defined in $L^2(\rn)$ which
satisfies the following \emph{assumptions}: \vspace{-0.25cm}
\begin{enumerate}
\item[(E$_1$)]
 The operator $T$ is a one-to-one operator of
type $\omega$ in $L^2(\rn)$ with $\omega\in[0,\,\pi/2)$;
\vspace{-0.25cm}
\item[(E$_2$)] The operator $T$ has a bounded $H_\fz$
functional calculus in $L^2(\rn)$; \vspace{-0.25cm}
\item[(E$_3$)] Let $k\in \nn$. The operator $T$
generates a holomorphic semigroup $\{e^{-tT}\}_{t>0}$ which
satisfies the {\it $k$-Davies-Gaffney estimate}, namely, there exist
positive constants $C_6$ and $C_7$ such that for all closed sets $E$
and $F$ in $\rn$, $t\in(0,\,\fz)$ and $f\in L^2(\rn)$ supported in
$E$,
\begin{eqnarray*}
\|e^{-tT}f\|_{L^2(F)}\le C_6
\exp\lf\{-\frac{\lf[\dist(E,\,F)\r]^{2k/(2k-1)}}
{C_7t^{1/(2k-1)}}\r\}\|f\|_{L^2(E)}.
\end{eqnarray*}
\end{enumerate}
\vspace{-0.25cm}

When $k=1$, the $k$-Davies-Gaffney estimate is just \eqref{2.1}.

Let $k\in\nn$. Typical examples of operators, satisfying the above
assumptions (E$_1$), (E$_2$) and (E$_3$), include the following
\emph{$2k$-order divergence form homogeneous elliptic operator}
\begin{equation}\label{4.1}
T_1\equiv(-1)^k\dsum_{|\az|=|\bz|=k}\pat^\az(a_{\az,\bz}\pat^\bz)
\end{equation}
with complex bounded measurable coefficients
$\{a_{\az,\,\bz}\}_{|\az|=|\bz|=k}$, and the following
\emph{$2k$-order Schr\"odinger-type operator}
\begin{equation}\label{4.2}
T_2\equiv(-\bdz)^k+V^k
\end{equation}
with $0\le V\in L^k_{\loc}(\rn)$.

For all $f\in L^2(\rn)$ and $x\in\rn$, define the \emph{$T$-adapted
square function} $S_Tf(x)$ by
\begin{eqnarray*}
S_Tf(x)\equiv\lf\{\iint_{\Gamma(x)}|t^{2k}Te^{-t^{2k}T}f(y)|^2
\frac{dy\,dt}{t^{n+1}}\r\}^{1/2}.
\end{eqnarray*}

Using the $T$-adapted square function $S_Tf$, Cao and Yang \cite{cy}
introduced the following Hardy space $H_T^p(\rn)$ associated to $T$.

\begin{definition}[\cite{cy}]\label{d4.1}
Let $p\in(0,\,1]$ and $T$ satisfy the assumptions (E$_1$), (E$_2$)
and (E$_3$). A function $f\in L^2(\rn)$ is said to be in
$\mathbb{H}_T^p(\rn)$ if $S_Tf\in L^p(\rn)$; moreover, define
$\|f\|_{H_T^p(\rn)}\equiv\|S_Tf\|_{L^p(\rn)}.$ The {\it Hardy space}
$H_T^p(\rn)$ is then defined to be the completion of
$\mathbb{H}_T^p(\rn)$ with respect to the quasi-norm
$\|\cdot\|_{H_T^p(\rn)}$.
\end{definition}

Let $i\in\{1,\,2\}$. By first establishing the molecular
characterization of $H_{T_i}^p(\rn)$, Cao and Yang \cite{cy} then
obtain the following boundedness of the Riesz transform $\nabla^k
(T_i^{-1/2})$ from $H_{T_i}^p(\rn)$ to $H^p(\rn)$ when
$p\in(n/(n+k),\,1]$.

\begin{theorem}[\cite{cy}]\label{t4.1}
Let $k\in\nn$, $p\in(n/(n+k),\,1]$, $T_1$ be the $2k$-order
divergence form homogeneous elliptic operator with complex bounded
measurable coefficients as in \eqref{4.1}, and $T_2$ the
$2k$-order Schr\"odinger-type operator as in \eqref{4.2}.
Then, for $i\in\{1,\,2\}$, the Riesz transform $\nabla^k
(T_i^{-1/2})$ is bounded from $H_{T_i}^p(\rn)$ to $H^p(\rn)$.
\end{theorem}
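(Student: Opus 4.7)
The plan is to adapt the molecular approach used for Theorems \ref{t1.1} and \ref{t1.2} to the $2k$-order setting, following the framework of \cite{cy}. Since $p\in(n/(n+k),\,1]$ lies strictly above the critical exponent, the argument can target the classical Hardy space $H^p(\rn)$ directly via its atomic/molecular characterization, so the more delicate weak-type endpoint analysis used in Section \ref{s3} can be avoided. First, appeal to the molecular characterization of $H_{T_i}^p(\rn)$ established in \cite{cy}: any $f\in H_{T_i}^p(\rn)\cap L^2(\rn)$ admits a decomposition $f=\sum_j\lz_jm_j$, where the $m_j$ are $(H_{T_i}^p,\,\ez,\,M)$-molecules associated with balls $B_j\equiv B(x_{B_j},\,r_{B_j})$ for suitably large $M$ and $\ez$, with $\{\sum_j|\lz_j|^p\}^{1/p}\sim\|f\|_{H_{T_i}^p(\rn)}$. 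It therefore suffices to prove the uniform bound $\|\nabla^k T_i^{-1/2}m\|_{H^p(\rn)}\le C$ on a single molecule, since the $p$-subadditivity of $\|\cdot\|_{H^p(\rn)}^p$ then yields the global estimate.

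Second, fix a molecule $m$ associated with $B\equiv B(x_B,\,r_B)$. Using assumptions (E$_1$), (E$_2$), (E$_3$) together with the bounded $H_\fz$ functional calculus, establish the $2k$-order analogue of Lemma \ref{l2.3}: there exists a positive constant $C$, depending on $M$, such that for all closed $E,\,F\subset\rn$ with $\dist(E,\,F)>0$, all $t\in(0,\,\fz)$ and all $f\in L^2(\rn)$ supported in $E$,
\begin{equation*}
\lf\|\nabla^k T_i^{-1/2}\lf(I-e^{-t^{2k}T_i}\r)^Mf\r\|_{L^2(F)}\le C\lf(\frac{t^{2k}}{[\dist(E,\,F)]^{2k}}\r)^M\|f\|_{L^2(E)},
\end{equation*}
with an analogous bound when $(I-e^{-t^{2k}T_i})^M$ is replaced by $(t^{2k}T_ie^{-t^{2k}T_i})^M$. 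Expanding via the binomial identity $I=(I-e^{-r_B^{2k}T_i})^M+\sum_{\ell=1}^M c_\ell(r_B^{2k}T_ie^{-(\ell/M)r_B^{2k}T_i})^M(r_B^{2k}T_i)^{-M}$ and decomposing $\rn$ into the dyadic annuli $S_j(B)$, one writes $\nabla^k T_i^{-1/2}m=\sum_{j\in\zz_+}\mu_j\wz a_j$, where each $\wz a_j$ is supported in $2^{j+1}B$, satisfies $\|\wz a_j\|_{L^2(\rn)}\ls|2^jB|^{1/2-1/p}$ with geometric decay in $j$, and carries vanishing moments up to order $k-1$ (inherited from the fact that $\nabla^k$ annihilates polynomials of degree $<k$, combined with the cancellation built into $T_i^{-1/2}(I-e^{-r_B^{2k}T_i})^Mm$).

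Third, because $p\in(n/(n+k),\,1]$ implies $n(1/p-1)<k$, the required vanishing-moment order for a classical $(H^p,\,2)$-atom is at most $k-1$, which is precisely matched by the cancellation produced by the $k$-th order Riesz transform. Consequently, each $\wz a_j$ is a fixed multiple of a classical $(p,\,2,\,k-1)$-atom of $H^p(\rn)$ (or, more precisely, a classical $H^p$-molecule with sufficient decay), and the sequence $\{\mu_j\}_{j}$ satisfies $\sum_j|\mu_j|^p\ls 1$ by the off-diagonal decay factor $(2^{-jk})^{Mp}$ coming from the $k$-Davies--Gaffney property, provided $M$ is chosen large relative to $n/p$. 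Assembling these pieces via the classical atomic decomposition theorem for $H^p(\rn)$ yields the target bound $\|\nabla^k T_i^{-1/2}m\|_{H^p(\rn)}\ls 1$. The main obstacle is the $2k$-order off-diagonal estimate displayed above: it is not a direct consequence of the $k$-Davies--Gaffney estimate, and one has to represent $T_i^{-1/2}$ through a Cauchy integral over the resolvent using the bounded $H_\fz$ functional calculus, then transfer the $\nabla^k$ and the factor $(I-e^{-r_B^{2k}T_i})^M$ through the contour, combining decay from the semigroup with the cancellation $(I-e^{-r_B^{2k}T_i})^M$ to extract the desired $(t^{2k}/[\dist(E,F)]^{2k})^M$ gain.
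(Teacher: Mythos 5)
The paper does not contain its own proof of Theorem~\ref{t4.1}: the statement is cited from~\cite{cy}, with the only description of the method being the remark preceding it, namely that \cite{cy} proceeds ``by first establishing the molecular characterization of $H_{T_i}^p(\rn)$'' and then deducing the boundedness. Your proposal follows exactly this route, and is further consistent with the structure of the proofs of Theorems~\ref{t1.1} and~\ref{t1.2} in Section~\ref{s3} (which are the $k=1$, weak-type analogues): reduce to a single $(H_{T_i}^p,\,\ez,\,M)$-molecule $m$ via the molecular decomposition and $p$-subadditivity; split $\nabla^kT_i^{-1/2}m$ using the binomial/semigroup identity and dyadic annuli $S_j(B)$; control the resulting pieces with $L^2$ off-diagonal estimates of the type in Lemma~\ref{l2.3}, adapted to the $k$-Davies--Gaffney scaling; invoke $n(1/p-1)<k$ to see that vanishing moments of order $\le k-1$ suffice for classical $H^p(\rn)$ atoms/molecules; and sum the geometrically-decaying coefficients.

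Two points deserve flagging as genuinely incomplete, even granting the level of detail appropriate for a sketch. First, you assert that the pieces $\wz a_j$ carry vanishing moments up to order $k-1$ ``inherited from the fact that $\nabla^k$ annihilates polynomials of degree $<k$''. This is the right heuristic, but it is not automatic: one must actually verify that $\int_{\rn}x^\alpha\,\nabla^kT_i^{-1/2}m(x)\,dx=0$ for $|\alpha|\le k-1$, which involves an integration by parts whose boundary terms need to be shown to vanish (this is nontrivial and operator-dependent; compare assumption (B$_3$) and the reference to \cite[Lemma~8.5]{hlmmy} in Lemma~\ref{l2.1} for $k=1$). Second, the displayed $2k$-order off-diagonal estimate
\begin{equation*}
\lf\|\nabla^k T_i^{-1/2}\lf(I-e^{-t^{2k}T_i}\r)^Mf\r\|_{L^2(F)}\ls\lf(\frac{t^{2k}}{[\dist(E,F)]^{2k}}\r)^M\|f\|_{L^2(E)}
\end{equation*}
is stated but not established; you correctly identify this as the main obstacle, but it is precisely the content of the higher-order analogue of Lemma~\ref{l2.3} and cannot simply be presupposed. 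Since the paper itself omits the proof and delegates it to~\cite{cy}, these two gaps are not contradicted by anything in the present text, but they are the load-bearing technical points that a full argument must supply.
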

Again, for $i\in\{1,\,2\}$, applying the  molecular characterization
of $H_{T_i}^p(\rn)$ from \cite{cy}, by an argument similar to that
used in the proof of Theorem \ref{t1.2}, we obtain the endpoint
boundedness of $\nabla^k (T_i^{-1/2})$ in the critical case that
$p=n/(n+k)$. We omit the details by similarity.

\begin{theorem}\label{t4.2}
Let $k\in\nn$, $p\equiv n/(n+k)$, $T_1$ be the $2k$-order divergence
form homogeneous elliptic operator with complex bounded measurable
coefficients as in \eqref{4.1}, and $T_2$ the $2k$-order
Schr\"odinger-type operator as in \eqref{4.2}.
Then, for $i\in\{1,\,2\}$, the Riesz transform $\nabla^k
(T_i^{-1/2})$ is bounded from $H_{T_i}^p(\rn)$ to $WH^p(\rn)$.
\end{theorem}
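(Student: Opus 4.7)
The plan is to adapt the argument of Theorem \ref{t1.2} to the higher-order setting, with $L_2$, $\nabla$ and the critical threshold $n/(n+1)$ replaced throughout by $T_i$, $\nabla^k$ and $p\equiv n/(n+k)$. By density, I restrict to $f\in H_{T_i}^p(\rn)\cap L^2(\rn)$, fix $M\in\nn$ with $M>n/(4k)+1/2$, and take $\ez$ large enough (say $\ez>n(1/p-1/2)+k$). The molecular characterization of $H_{T_i}^p(\rn)$ from \cite{cy} yields $f=\sum_j\lz_jA_j$ in $L^2(\rn)$, where the $A_j$ are $(H_{T_i}^p,\,\ez,\,M)$-molecules and $\|f\|_{H_{T_i}^p(\rn)}\sim\{\sum_j|\lz_j|^p\}^{1/p}$. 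Using the radial-maximal characterization of $WH^p(\rn)$ with a test function $\fai\in C^\fz_c(\rn)$ supported in $B(0,\,1)$ and Lemma \ref{l2.2}, the theorem reduces to proving that for every such molecule $A$ associated to a ball $B\equiv B(x_B,\,r_B)$ and every $\az\in(0,\,\fz)$,
\begin{eqnarray*}
\lf|\lf\{x\in\rn:\ \sup_{0<t<\fz}\lf|\fai_t\ast(\nabla^kT_i^{-1/2}A)(x)\r|>\az\r\}\r|\ls\az^{-p}.
\end{eqnarray*}

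I will split this set into its intersections with $16B$, with $(16B)^\complement\cap\{\sup_{0<t<r_B}\lf|\cdots\r|>\az/2\}$ and with $(16B)^\complement\cap\{\sup_{r_B\le t<\fz}\lf|\cdots\r|>\az/2\}$, mirroring the proofs of Theorems \ref{t1.1} and \ref{t1.2}. The contribution on $16B$ is dispatched via Chebyshev's inequality, H\"older's inequality, the $L^2(\rn)$-boundedness of the Hardy-Littlewood maximal operator, the $L^2(\rn)$-boundedness of $\nabla^k(T_i^{-1/2})$ (a consequence of the bounded $H_\fz$ functional calculus (E$_2$); see \cite{cy}), and the molecular size bound. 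For the ``near'' tail $\{0<t<r_B\}$, the annular decomposition $S_j(B)\equiv 2^jB\setminus 2^{j-1}B$ combined with $\supp\fai\subset B(0,\,1)$ localizes $\fai_t\ast(\nabla^kT_i^{-1/2}A)$ to a thickened annulus, to which I apply the higher-order analogue of Lemma \ref{l2.3}: for closed sets $E,\,F\subset\rn$ with $\dist(E,\,F)>0$ and $f\in L^2(\rn)$ supported in $E$,
\begin{eqnarray*}
\lf\|\nabla^kT_i^{-1/2}(I-e^{-t^{2k}T_i})^Mf\r\|_{L^2(F)}\ls\lf(\frac{t^{2k}}{[\dist(E,\,F)]^{2k}}\r)^M\|f\|_{L^2(E)},
\end{eqnarray*}
together with the analogous bound when $(I-e^{-t^{2k}T_i})^M$ is replaced by $(t^{2k}T_ie^{-ct^{2k}T_i})^M$; both follow from the $k$-Davies-Gaffney estimate (E$_3$) and the bounded $H_\fz$ functional calculus (E$_2$), as developed in \cite{cy}. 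Summing over $j$ uses $M>n/(4k)+1/2$ and $p=n/(n+k)$ to produce a finite geometric series.

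For the ``far'' tail $\{r_B\le t<\fz\}$, I will invoke the vanishing-moment conditions
\begin{eqnarray*}
\int_\rn y^\bz\nabla^k(T_i^{-1/2})A(y)\,dy=0\quad\text{for all multi-indices }\bz\text{ with }|\bz|<k,
\end{eqnarray*}
which is the natural higher-order cancellation for molecules in the $2k$-order setting; it follows by integrating the $k$ outer derivatives against $y^\bz$ and exploiting the molecular structure $A=T_i^M(T_i^{-M}A)$ via an approximation argument using the displayed off-diagonal estimate applied to $(I-e^{-r_B^{2k}T_i})^MA$, in the style of \cite{hlmmy,jy,cy}. Taylor expansion of $\fai$ at $x_B$ to order $k-1$ then produces a gain of $(|y-x_B|/t)^k$ in the convolution, which combined with the $t^{-n}$ normalization of $\fai_t$ yields a $j$-decay $2^{-j(n+k)}$ in the dyadic $t$-decomposition. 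Using in addition the splitting $A=\chi_{\wz S_i(B)}A+\chi_{\rn\setminus\wz S_i(B)}A$, the off-diagonal estimate above, the choice of a critical scale $j_0$ and Chebyshev's inequality (exactly as in the proof of Theorem \ref{t1.2}), one arrives at an estimate of the form $|\wz{\mathrm{J}}_i|\ls 2^{-C_0i}\az^{-p}$ for some $C_0>0$; the positivity of $C_0$ uses precisely that $p=n/(n+k)$, $M>n/(4k)+1/2$ and $\ez$ is large. Lemma \ref{l2.2} then closes the sum in $i$.

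The hard part will be the rigorous justification of the $k$-th order vanishing moments, since $\nabla^k(T_i^{-1/2})A$ is only \emph{a priori} an $L^2$ function and not integrable against polynomials; this will require a careful truncation/limiting argument combining the displayed off-diagonal bound applied to $(I-e^{-r_B^{2k}T_i})^MA$ with dominated convergence. A secondary technical point is verifying that the Lemma \ref{l2.3}-type off-diagonal bound indeed holds with the scaling $(t^{2k}/[\dist(E,\,F)]^{2k})^M$ in the $k$-Davies-Gaffney framework, relying on the $H_\fz$ functional calculus of $T_i$ in place of self-adjointness together with the subordination formula relating $T_i^{-1/2}$ to the semigroup $\{e^{-sT_i}\}_{s>0}$.
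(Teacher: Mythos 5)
Your proposal is correct and is exactly the argument the paper intends: the paper merely states ``by an argument similar to that used in the proof of Theorem \ref{t1.2}, we obtain \dots We omit the details by similarity,'' and your outline is precisely that adaptation with $L_2$, $\nabla$, $n/(n+1)$ replaced by $T_i$, $\nabla^k$, $n/(n+k)$ and with the first-order $k$-Davies--Gaffney rescaling in Lemma \ref{l2.3} becoming $(t^{2k}/[\dist(E,F)]^{2k})^M$. You are also right to single out the upgrade from the mean-value trick used in the far tail $\mathrm{J}$ of Theorems \ref{t1.1}--\ref{t1.2} to a degree-$(k-1)$ Taylor expansion of $\fai$ at $x_B$ paired with the vanishing moments $\int_{\rn} y^\bz\,\nabla^k(T_i^{-1/2})A(y)\,dy=0$ for $|\bz|<k$ as the genuinely new ingredient: with only the zeroth-order cancellation and the mean-value theorem one obtains a gain of a single power $|y-x_B|/t$, and the balance $n=p(n+1)$ that kills the $2^{j_0}$ factor in the critical-scale argument holds exactly at $p=n/(n+1)$ and \emph{fails} for $p=n/(n+k)$ with $k>1$, so the geometric series in $j_0$ does not close without the full $(|y-x_B|/t)^k$ gain. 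This is hidden in the paper's ``omit by similarity'' phrase, and your sketch for justifying the moments via integration by parts on $\nabla^k\bigl(T_i^{-1/2}(I-e^{-r_B^{2k}T_i})^MA\bigr)$, using the off-diagonal $L^2$-decay to control the weights $|y|^{|\bz|}$ and to kill boundary terms (a point already handled in \cite{cy} in the open range $p\in(n/(n+k),1]$), is the right way to make it rigorous.
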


\noindent{\bf Acknowledgements.} The second author is supported by
National Natural Science Foundation (Grant No. 11171027) of China
and Program for Changjiang Scholars and Innovative Research Team in
University of China.

\end{document}